\newif\ifCOLT
\newif\ifSIAM
\newif\ifARXIV
\newif\ifCOAA
\newif\ifJOTA
\newif\ifMP
\newif\ifARXIVorSIAM
\newif\ifCOLTorSIAM
\journalname{JOTA}
\newtheorem{assumption}{Assumption}
\newcommand{\argmin}{\mathop{\rm argmin}}
\def\N{\mathbb{N}}
\def\R{\mathbb{R}}
\newcommand{\PaperTitle}[0]{A simple and practical adaptive trust-region method\thanks{This work is an improvement of our conference paper that has been published at NeurIPS 2022.}}
\newcommand{\MyThanks}[0]{Department of Industrial Engineering, University of Pittsburgh. Email: \{fah33, ohinder\}@pitt.edu.}
\title{\PaperTitle}
\author{Fadi Hamad, Oliver Hinder\thanks{\MyThanks}}
\def\cite{\citep}
\title{\PaperTitle\thanks{}
}
\author{Fadi Hamad, Oliver Hinder}
\def\cite{\citet}
\institute{Oliver Hinder \at
	Department of Industrial Engineering\\
	University of Pittsburgh \\
	\email{ohinder@pitt.edu}  
}
\date{Received: date / Accepted: date}
\def\cite{\citet}
\title{\PaperTitle\thanks{}
}
\author{Oliver Hinder}
\institute{Oliver Hinder \at
	University of Pittsburgh \\
	Pittsburgh, United States \\
	ohinder@pitt.edu
}
\date{Received: date / Accepted: date}
\newcommand{\eye}{\mbox{\bf I}}
\newtheorem{theorem}{Theorem}
\newtheorem{lemma}{Lemma}
\newtheorem{fact}{Fact}
\newtheorem{proposition}{Proposition}
\newcommand{\ubar}[1]{\underaccent{\bar}{#1}}
\newcommand{\edit}[1]{{#1}}
\newcommand{\possibleImprovement}[1]{{#1}}
\newcommand{\dminInd}[1]{\ubar{d}_{#1}}
\newcommand{\dmaxInd}[1]{\bar{d}_{#1}}
\newcommand{\GainThres}{b}
\newcommand{\AlgConst}{\xi}
\newcommand{\TheoryAlgConst}{C}
\newcommand{\defaultAlgConst}{0.1}
\newcommand{\undereq}[1]{\underset{#1}{=}}
\newcommand{\underle}[1]{\underset{#1}{\le}}
\newcommand{\underless}[1]{\underset{#1}{<}}
\newcommand{\undergreater}[1]{\underset{#1}{>}}
\newcommand{\underge}[1]{\underset{#1}{\ge}}
\newcommand{\SPTone}{\gamma_1}
\newcommand{\SPTtwo}{\gamma_2}
\newcommand{\SPTthree}{\gamma_3}
\newcommand{\RequireSPT}{$\SPTtwo \in (1/\omega_1, 1]$, $\SPTthree \in (0,1]$}
\begin{document}
	
\newcommand{\abstractText}[0]{
We present an adaptive trust-region method for unconstrained optimization that allows inexact solutions to the trust-region subproblems. Our method is a simple variant of the classical trust-region method of \citet{sorensen1982newton}. The method achieves the best possible convergence bound up to an additive log factor, for finding an $\epsilon$-approximate stationary point, i.e., $O( \Delta_f L^{1/2}  \epsilon^{-3/2}) + \tilde{O}(1)$ iterations where $L$ is the Lipschitz constant of the Hessian, $\Delta_f$ is the optimality gap, and $\epsilon$ is the termination tolerance for the gradient norm. This improves over existing trust-region methods whose worst-case bound is at least a factor of $L$ worse. We compare our performance with state-of-the-art trust-region (TRU) and cubic regularization (ARC) methods from the GALAHAD library on the CUTEst benchmark set on problems with more than 100 variables. We use fewer function, gradient, and Hessian evaluations than these methods. For instance, our algorithm’s median number of gradient evaluations is $23$ compared to $36$ for TRU and $29$ for ARC.

Compared to the conference version of this paper \cite{hamad2022consistently}, our revised method includes several practical enhancements. These modifications dramatically  improved performance, including an order of magnitude reduction in the shifted geometric mean of wall-clock times. \edit{We also show it suffices for the second derivatives to be locally Lipschitz to guarantee that either the minimum gradient norm converges to zero or the objective value tends towards negative infinity, even when the iterates diverge.}
}

\ifMP
\author[1]{\fnm{Fadi} \sur{Hamad} \orcidlink{0000-0002-2427-9734}}\email{fah33@pitt.edu}

\author[1]{\fnm{Oliver} \sur{Hinder} \orcidlink{0000-0002-5077-0359}}\email{ohinder@pitt.edu}

\affil[1]{\orgdiv{Department of Industrial Engineering}, \orgname{University of Pittsburgh}, \orgaddress{\street{5th Ave}, \city{Pittsburgh}, \postcode{15260}, \state{PA}, \country{USA}}}
\fi

\ifMP

\abstract{\abstractText}
\else
\maketitle
\begin{abstract}
	\abstractText
\end{abstract}
\fi

\ifCOLTorSIAM
\begin{keywords}
nonlinear optimization, nonconvex optimization, worst-case iteration complexity, trust-region methods	
\end{keywords}
\fi

\ifSIAM
\begin{AMS}
\end{AMS}
\fi

\ifMP
\keywords{
	Unconstrained programming, nonlinear optimization, non-convex optimization, worst-case iteration complexity, trust-region methods	
}

\newcommand{\journalname}{Mathematical Programming}

\maketitle
\renewcommand{\thefootnote}{\fnsymbol{footnote}}
\footnotetext[1]{This work is an improvement of our conference paper that has been published at NeurIPS 2022.}
\fi 
	
\section{Introduction}

Consider the unconstrained optimization problem
\edit{\[
	\min_{x \in \R^n} f(x)
	\]}
where the function $f: \R^ n \rightarrow \R$ is twice differentiable and possibly non-convex. Finding a global minimizer of this problem is intractable for large $n$ \cite[sect. 1.6]{nemirovskii1983problem}; instead, we aim to find an $\epsilon$-approximate stationary point, i.e., a point $x$ with $\| \grad f(x) \| \le \epsilon$ where $\epsilon > 0$. Second-order methods \cite{sorensen1982newton,curtis2017trust,curtis2021trust,curtis2023worst,jiang2023universal, leconte2023complexity,leconte2024interior,nesterov2006cubic,cartis2011adaptiveII,royer2018complexity, liu2022newton, he2023newton}, which choose their next iterates based on gradient and Hessian information at the current iterate, are popular for solving these problems. For example, Newton's method, one of the earliest and simplest second-order methods, produces its search directions $d_k$ by minimizing the second-order Taylor series approximation of the function at the current iterate:
\begin{flalign}\label{eq:newton-direction}
	d_k  \in \argmin_{d} M_k(d) := \frac{1}{2} d^\top \grad^2 f(x_k) d +\grad f(x_k)^\top d,
\end{flalign}
and the next iterate becomes $x_{k+1} = x_k + d_k$.
Unfortunately, unless $\grad^2 f(x_k)$ is positive definite, this calculation is not well-defined.  
To rectify this issue, \citet{sorensen1982newton} proposed restricting the search direction 
in \eqref{eq:newton-direction} to a ball of radius $r_k$ around the current iterate:
\begin{flalign}\label{eq:trust-region-subproblem}
	d_k  \in \argmin_{d : \| d \| \le r_k} M_k(d)
\end{flalign}
where $\| \cdot \|$ is the spectral norm for matrices and the Euclidean norm for vectors. 
This approach, known as a trust-region method \cite{conn2000trust}, is reliable and popular. \citet[Theorem 4.5]{sorensen1982newton} shows that its iterates asymptotically converge to stationary points, i.e., points $x$ with $\grad f(x) = 0$.
On the other hand, while knowing that the iterates will eventually converge to an approximate stationary point is desirable, it leaves open the question: how long this could take?

This question motivates the study of worst-case convergence bounds for finding approximate stationary points.
Under the assumption that the gradient is smooth, i.e., there exists $S > 0$ such that $\| \grad f(u) - \grad f(v) \| \le S \| u - v \|$ for all $u, v \in \R^{n}$ and the optimality gap $\Delta_f := f(x_1) - \inf_{x \in \R^{n}} f(x)$ at the starting point $x_1$ is finite, gradient descent requires at most $2 S \Delta_f  \epsilon^{-2}$ iterations
to find an $\epsilon$-approximate stationary point \cite[Chap. 1]{nesterov2013introductory}.  This is the best possible worst-case bound (up to constant factors) on smooth functions \cite{carmon2017loweri}.
On the other hand, Nesterov and Polyak \citep{nesterov2006cubic} developed a second-order method that on functions with $L$-Lipschitz Hessian, i.e.,
\[ \| \grad^2 f(u) - \grad^2 f(v) \| \le L \| u - v \| \quad \forall u, v \in \R^{n}, \]
requires at most $O(\Delta_f L^{1/2} \epsilon^{-3/2} )$ iterations to find an $\epsilon$-approximate stationary point. 
For sufficiently small $\epsilon$, this bound improves over gradient descent's bound. Moreover, this bound is the best possible guarantee on functions with Lipschitz Hessians \cite{carmon2017loweri}. 
They achieved this result using cubic regularized Newton’s method (CRN), which adds the cubic regularization term $\frac{L}{6} \| d \|^3$ to $M_k(d)$ in \eqref{eq:newton-direction}. This term plays a similar role to a trust-region, discouraging the search direction from being too far from the initial point.

Unfortunately, CRN requires knowledge of the Lipschitz constant of the Hessian, which is rarely practical. To address this issue, \citet{cartis2011adaptiveI,cartis2011adaptiveII} developed adaptive cubic regularized Newton (ARC). ARC dynamically maintains a cubic regularization parameter $\sigma_k$ which can be interpreted as a local estimator of $L$. This estimator is decreased on steps with sufficient function value reduction and increased on steps with insufficient reduction.

Given that trust-region methods use second-order information and in practice converge in fewer iterations than gradient descent, one might anticipate they also obtain the $O(\Delta_f L^{1/2} \epsilon^{-3/2} )$ worst-case iteration bound achieved by CRN. 
Unfortunately, this is not true for the classical trust-region method of \citet{sorensen1982newton}.
For example, one can construct functions with $L$-Lipschitz Hessian and $\Delta_f < \infty$, such that classical trust-region methods require $\epsilon^{-2}$ iterations to find an $\epsilon$-approximate stationary point for any $\epsilon \in (0,1)$ \cite{newtons_method_slow_convergence_global_lip}.

Significant efforts have been made to develop an adaptive trust-region method matching the cubic regularization complexity bound \cite{jiang2023universal,curtis2017trust,curtis2021trust,curtis2023worst}, though the resulting methods have diverged substantially from the original method. For instance, TRACE \cite{curtis2017trust} achieves a complexity bound proportional to $\epsilon^{-3/2}$, but its approach differs in several key aspects from the original trust-region framework.
TRACE maintains a cubic regularization parameter (similar to the cubic regularization parameter in ARC) that the algorithm updates along with the trust-region radius \cite[Algorithm 1]{curtis2017trust}. This makes TRACE significantly more complex compared to the classical trust-region framework.
While \citet{jiang2023universal} and \citet[Algorithm 1]{curtis2021trust} are relatively simple variants of the classical trust-region method, they do introduce a regularization parameter to 
the trust-region subproblem, which can degrade practical performance  \citet[Figure 1]{curtis2021trust}.

Finally, an important topic of study is developing methods that match  the whole $O(\Delta_f L^{1/2} \epsilon^{-3/2})$ bound, including the Lipschitz constant, not just the $\epsilon^{-3/2}$ scaling. 
From Table~\ref{table:adaptive-second-order-methods}, we can see that this has been achieved 
for adaptive cubic regularization methods \cite{grapiglia2017regularized,cartis2019universal}.
However, inspection of Table~\ref{table:adaptive-second-order-methods} also shows that for most existing trust-region methods, the $L$ scaling is greater than or equal to $L^{3/2}$, which is a factor of $L$ worse than the optimal scaling of $L^{1/2}$.
Only the work of \citet{jiang2023universal} (which is a follow-up to the conference version of this paper \cite{hamad2022consistently}), a recent trust-region method that incorporates both a regularization parameter and a ball constraint, achieves a convergence bound close to the full optimal complexity bound.

\begin{table}[!thb]
	\caption{Adaptive second-order methods along with their worst-case  bounds on the number of gradient, function, and Hessian evaluations. $\sigma_{s}\in (0,\infty)$ is the smallest regularization parameter used by ARC \citep{cartis2011adaptiveII}.
		$\sigma_0 \in (0,\infty)$ is the initial regularization parameter for cubic regularized methods. Table sourced from \citet{hamad2022consistently}.
	}
	\label{table:adaptive-second-order-methods}
	\centering
	{\renewcommand{\arraystretch}{1.3}
		\resizebox{\textwidth}{!}{
			\begin{tabular}{lll}
				\toprule
				\textbf{Algorithm} & \textbf{Type} & \textbf{Worst-case iteration bound} \\\midrule
				\small \textbf{ARC \citep{cartis2011adaptiveII}} & cubic regularized & $O(\Delta_f  L^{3/2} \sigma_{s}^{-1} \epsilon^{-3/2} + \Delta_f \sigma_{s}^{1/2} \epsilon^{-3/2})$ \\
				\small \textbf{Nesterov et al. \citep[Eq. 5.13 and 5.14]{grapiglia2017regularized}}  & cubic regularized & $O(\Delta_f  \max\{L, \sigma_0 \}^{1/2} \epsilon ^ {-3/2}) + \tilde{O}(1)$ \\
				\small \textbf{ARp \citep[Section 4.1]{cartis2019universal}}  & cubic regularized &  $O(\Delta_f \max\{L, \sigma_0 \}^{1/2} \epsilon^{-3/2}) + \tilde{O}(1)$\\
				\small  \textbf{TRACE \citep[Section 3.2]{curtis2017trust}} & trust-region & $O(\Delta_f L^{3/2} \epsilon^{-3/2}) + \tilde{O}(1)$\\
				\small  \textbf{I-TRACE \citep[Section 3.2]{curtis2023worst}} & trust-region & $O(\Delta_f L^{3/2} \epsilon^{-3/2}) + \tilde{O}(1)$\\
				\small \textbf{Curtis et al. \citep[Section 2.2]{curtis2021trust}} & trust-region & $\tilde{O}\left( \Delta_f \max{\left\{ L^2, 1 + L \right\}} \epsilon^{-3/2} \right)$\\
				\small \textbf{Jiang et al. \citep[Section 3.1]{jiang2023universal}} & trust-region & $\tilde{O}\left( \Delta_f L ^ {1/2} \epsilon^{-3/2} \right)$\\
				\small \textbf{Newton-MR \citep[Section 3.2.1]{liu2022newton}} & line-search &  $O( \Delta_f \max\{L ^ 2, L^1\}  \epsilon^{-3/2})$\\
				\small \textbf{\citet[Section 4]{he2023newton}} & line-search &  $O( \Delta_f L ^{1/2} \epsilon^{-3/2})$\\
				\small \textbf{AN2C \citep[Section 3]{gratton2024yet}} & regularized Newton & $\tilde{O}( \Delta_f \max\{L ^ {1/2}, L^2\}  \epsilon^{-3/2}) +  \tilde{O}(1)$\\
				\small \textbf{Theorem~\ref{thm:main-fully-adaptive-result}} & trust-region & $O( \Delta_f L^{1/2}  \epsilon^{-3/2}) + \tilde{O}(1)$\\
				\bottomrule
			\end{tabular}
		}
	}
\end{table}

\paragraph{Outline} The paper is structured as follows. In \Cref{sec:our-tr-method}, we introduce our trust-region method. In \Cref{sec:main-result}, we present our main result: the optimal first-order iteration complexity of \Cref{alg:CAT}
\edit{ on functions with Lipschitz continuous Hessian. In \Cref{sec:locally-lipschitz-second-derivatives}, we extend our results for convergence on functions with locally Lipschitz second derivatives. In \Cref{sec:numerical-results}, we evaluate the performance of our algorithm on the CUTEst test set \citep{gould2015cutest} and discuss the experimental results.  For completeness, \Cref{app:convergence-gd-armijo-rule} presents a side result on the convergence of gradient descent with the Armijo rule for locally Lipschitz functions. In \Cref{sec:comparison-conference-version}, we compare the method developed in this paper with the conference version of this work \citep{hamad2022consistently}. Finally, \Cref{sec:Solving the trust-region sub-problem} describes our trust-region subproblem solver's implementation. }

\paragraph{Notation} Let  $\N$ be the set of natural numbers (starting from one), $\eye$ be the identity matrix, and $\R$ be the set of real numbers. Let $\|\cdot\|$ denote the $\ell_2$ norm for a vector and the spectral norm for a matrix.
Throughout this paper, we assume that $n \in \N$ and $f : \R^{n} \rightarrow \R$ is bounded below and twice differentiable. For the remainder of this paper, $x_k$ and $d_k$ refer to the iterates of Algorithm~\ref{alg:CAT}.

\section{Our trust-region method}\label{sec:our-tr-method}
\newcommand{\AcceptableConst}{\sigma}
\begin{algorithm}[H]
	\caption{\textbf{C}onsistently  \textbf{A}daptive \textbf{T}rust Region Method (CAT)}\label{alg:CAT}
	\textbf{Input requirements:} $r_{1} \in (0,\infty)$, $x_1 \in \R^{n}$ \;
	\textbf{Problem-independent parameter requirements:} $\theta \in (0,1),  \beta \in (0,1)$, \possibleImprovement{$\AcceptableConst \in [0,\beta]$}, $\omega_1 \in (1,\infty), \omega_2 \in [\omega_1,\infty)$, $ 0 \leq \SPTone < \frac{1}{2} (1 - \frac{\beta \theta}{\SPTthree(1 - \beta)})$, \RequireSPT;\\
	\For{$k = 1, \dots, \infty$}{
		Approximately solve the trust-region subproblem i.e., obtain $d_k$ that satisfies \eqref{eq:subproblem-termination-criteria}  \;
		
		$x_{k + 1} \gets 
		\begin{cases}
			x_k + d_k  \quad\quad\quad \text{if } f(x_k + d_k) \le f(x_k)  \possibleImprovement{~\&~ \hat{\rho}_k \ge \AcceptableConst} & \quad\quad \text{(step is accepted)} \\
			x_k  \quad\quad\quad\quad\quad~ \text{if } f(x_k + d_k) > f(x_k)  & \quad\quad \text{(step is rejected)} 
		\end{cases}$
		
		$r_{k + 1} \gets 
		\begin{cases}
			\max\{\omega_2 \| d_k \|, r_k \} \quad\quad~ \text{if }  \hat{\rho}_k \ge \beta & \quad\quad\quad\quad~ \text{(step is successful)} \\
			r_k / \omega_1 \quad\quad\quad\quad\quad\quad\quad   \text{if } \hat{\rho}_k <\beta  & \quad\quad\quad\quad~ \text{(step is unsuccessful)}
		\end{cases}$
	}
\end{algorithm}

\subsection{Our approach}\label{sec:our-trust-region-method}
\newcommand{\Mk}{M_k}

The selection of the radius $r_k$ significantly impacts the performance of trust-region methods.
At each iteration of classical trust-region methods \cite{sorensen1982newton,conn2000trust} we calculate the ratio between the actual reduction, $f(x_k) - f(x_k + d_k)$, and the predicted reduction from the second-order Taylor series model:
\begin{flalign}\label{eq:classic-rho-k}
	\rho_k := \frac{f(x_k) - f(x_k + d_k)}{-\Mk(d_k)}.
\end{flalign}
The radius $r_k$ is increased when $\rho_k$ is close to one and reduced when $\rho_k$ is close to zero \citep{sorensen1982newton}. Unfortunately, there are examples of functions with Lipschitz continuous Hessians for which classical trust-region methods exhibit a convergence rate proportional to $\epsilon^{-2}$  \citep[Section~3]{cartis2010complexity} instead of $\epsilon^{-3/2}$. 

To address this issue, we modify \eqref{eq:classic-rho-k} by adding \edit{a term that depends on the norms of both the search direction and the gradient}: $\frac{\theta}{2} \min\{\|\grad f(x_k)\|, \|\grad f(x_k + d_k) \| \} \| d_k \|$ to the predicted reduction, yielding a new ratio:
\begin{equation}\label{eq:rho-hat-k}
	\hat{\rho}_k := \frac{f(x_k) - f(x_k + d_k)}{-\Mk(d_k) + \frac{\theta}{2} \min\{\|\grad f(x_k)\|, \|\grad f(x_k + d_k) \| \} \| d_k \| } 
\end{equation}
where $\theta \in (0,\infty)$ is a problem-independent hyperparameter (we use $\theta = 0.1$ in our implementation). 
The idea behind \eqref{eq:rho-hat-k} is that on successful steps, i.e., when $\hat{\rho}_k \ge \beta$, we have 
$f(x_k) - f(x_k + d_k) \ge \frac{\beta \theta}{2} \min\{\|\grad f(x_k)\|, \|\grad f(x_k + d_k) \| \} \| d_k \|$.
Moreover, if $\min\{\|\grad f(x_k)\|, \|\grad f(x_k + d_k) \| \} \ge \epsilon$ then $f(x_k) - f(x_k + d_k) \ge \frac{\beta \theta}{2} \epsilon \| d_k \|$. 
Thus, on successful steps with a large search direction, we can guarantee significant reduction in the function value.

Our complete trust-region method is presented in Algorithm~\ref{alg:CAT}. 
Aside from replacing $\rho_k$ with $\hat{\rho}_k$, other differences from classical trust-region methods \citep{sorensen1982newton} include: (i) we accept all search directions that reduce the function value, in contrast to standard trust-region methods \citep{sorensen1982newton}, which require the ratio $\rho$ to be above a certain threshold $\eta \in (0,1)$, and (ii) when $\hat{\rho}_k \geq \beta$, we update $r_{k+1}$ to $\max\{\omega_2 \|d_k\|, r_k\}$ instead of setting $r_{k+1}$ equal to $\omega_2 r_k$ as in classical trust-region methods \citep{sorensen1982newton}.

\subsection{Trust-region subproblem termination criteria}\label{sec:our-trust-region-subproblem}

Finding a solution to \eqref{eq:trust-region-subproblem} is often nontrivial, primarily because the Hessian of the objective function may not be positive definite ($\grad ^ 2 f(x_k) \nsucceq 0$) or because the unconstrained minimizer lies outside the trust-region ($\| \grad ^ 2 f(x_k) ^ {-1} \grad f(x_k)\| > r_k$) \citep{nocedal2006numerical}. Therefore, validating that a given search direction $d_k$ satisfies \eqref{eq:trust-region-subproblem} is a crucial practical consideration.

Fortunately, an exact solution for the trust-region subproblem \eqref{eq:trust-region-subproblem} is given by the following well-known fact.

\begin{fact}[Theorem 4.1 \citep{nocedal2006numerical}]   \label{fact:optimality-conditions-quadratic-model}
	The direction $d_k$ exactly solves \eqref{eq:trust-region-subproblem} if and only if there exists $\delta_k \in [0,\infty)$ such that:
	\begin{subequations}\label{eq:optimality-conditions-tr}
		\begin{flalign}
			\label{eq:model-gradient1}\grad \Mk(d_k) + \delta_k d_k &= 0 \\
			\delta_k r_k &\le \delta_k \| d_k \|  \\
			\| d_k \| &\le r_k \\
			\grad^2 f(x_k) + \delta_k \eye &\succeq 0  \label{eq:PSD-grad-squared-f-delta}
		\end{flalign}
	\end{subequations}
	which solves \eqref{eq:trust-region-subproblem}.
\end{fact}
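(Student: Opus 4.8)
The plan is to prove the two implications separately, treating the stated system as the KKT conditions of the subproblem \eqref{eq:trust-region-subproblem} together with a \emph{global} (rather than merely subspace) second-order condition. Write $H := \grad^2 f(x_k)$ and $g := \grad f(x_k)$, so that $M_k(d) = \tfrac12 d^\top H d + g^\top d$ and $\grad M_k(d) = H d + g$. First I would observe that \eqref{eq:model-gradient1} reads $(H + \delta_k \eye) d_k = -g$, and that combining $\delta_k r_k \le \delta_k \|d_k\|$ with $\|d_k\| \le r_k$ and $\delta_k \ge 0$ forces the complementarity relation $\delta_k (r_k - \|d_k\|) = 0$.

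For the easier \emph{sufficiency} direction, suppose such a $\delta_k$ exists. Since $H + \delta_k \eye \succeq 0$ by \eqref{eq:PSD-grad-squared-f-delta}, the quadratic $q(p) := \tfrac12 p^\top (H + \delta_k \eye) p + g^\top p = M_k(p) + \tfrac{\delta_k}{2}\|p\|^2$ is convex, and \eqref{eq:model-gradient1} gives $\grad q(d_k) = 0$, so $d_k$ minimizes $q$ over all of $\R^n$. Expanding $q(p) \ge q(d_k)$ yields $M_k(p) \ge M_k(d_k) + \tfrac{\delta_k}{2}(\|d_k\|^2 - \|p\|^2)$ for every $p$. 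For feasible $p$ (so $\|p\| \le r_k$) the correction is at least $\tfrac{\delta_k}{2}(\|d_k\|^2 - r_k^2)$, which vanishes by complementarity (either $\delta_k = 0$ or $\|d_k\| = r_k$). Hence $M_k(p) \ge M_k(d_k)$ on the ball, i.e. $d_k$ solves \eqref{eq:trust-region-subproblem}.

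For \emph{necessity}, assume $d_k$ solves \eqref{eq:trust-region-subproblem} and split into two cases. If $\|d_k\| < r_k$, then $d_k$ is an interior minimizer, so the unconstrained first- and second-order conditions give $H d_k + g = 0$ and $H \succeq 0$; taking $\delta_k = 0$ satisfies the whole system. If $\|d_k\| = r_k > 0$, then the constraint gradient $2 d_k$ is nonzero, LICQ holds, and the KKT theorem for the inequality-constrained problem furnishes a multiplier $\delta_k \ge 0$ with $H d_k + g + \delta_k d_k = 0$ and complementarity, which yields \eqref{eq:model-gradient1}, the middle two relations, and feasibility.

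The main obstacle, and the step I would spend the most care on, is upgrading the local second-order condition to the \emph{global} semidefiniteness \eqref{eq:PSD-grad-squared-f-delta}, the feature special to the trust-region subproblem. The plan is to use global minimality on the boundary sphere: for any $p$ with $\|p\| = r_k = \|d_k\|$, substituting $g = -(H + \delta_k \eye) d_k$ into $M_k(p) - M_k(d_k) \ge 0$ and using $\|p\|^2 = \|d_k\|^2$ to rewrite $\|d_k\|^2 - d_k^\top p = \tfrac12 \|p - d_k\|^2$, one obtains the identity $M_k(p) - M_k(d_k) = \tfrac12 (p - d_k)^\top (H + \delta_k \eye)(p - d_k) \ge 0$. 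It then remains to see that the chords $\{p - d_k : \|p\| = r_k\}$ point in a dense set of directions: for any $v$ with $d_k^\top v \neq 0$, the point $p = d_k - (2 d_k^\top v / \|v\|^2)\, v$ lies on the sphere and makes $p - d_k$ parallel to $v$, giving $v^\top (H + \delta_k \eye) v \ge 0$; a continuity argument then extends the inequality to all $v$, establishing \eqref{eq:PSD-grad-squared-f-delta} and completing the proof.
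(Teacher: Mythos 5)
The paper offers no proof of this statement at all—it is quoted as a known Fact, with the proof delegated to the cited reference (Theorem 4.1 of Nocedal and Wright). Your proof is correct and is essentially the classical argument from that reference: convexity of the shifted model $M_k(d) + \tfrac{\delta_k}{2}\|d\|^2$ combined with complementarity for sufficiency, and, for necessity, the KKT conditions plus the sphere-chord identity $M_k(p) - M_k(d_k) = \tfrac{1}{2}(p-d_k)^\top\left(\grad^2 f(x_k) + \delta_k \eye\right)(p-d_k)$ together with the reflection $p = d_k - \left(2 d_k^\top v/\|v\|^2\right) v$ and a density argument to upgrade to global semidefiniteness.
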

Exactly solving the trust-region subproblem defined in \eqref{eq:trust-region-subproblem} is not always possible. Instead, it suffices to solve the subproblem approximately. Accordingly, our approach seeks a search direction $d_k$ and a regularization parameter $\delta_k$ that satisfy the following system of equations:
\begin{subequations}
	\begin{flalign}
		\label{eq:model-gradient-weaker}\| \grad \Mk(d_k) + \delta_k d_k \| &\le \SPTone \varepsilon_k \\
		\label{eq:comp-delta-radius1} \SPTtwo \delta_k r_k  &\le \delta_k \| d_k \| \\
		\| d_k \| &\le r_k  \\
		\label{eq:model-reduction-by-dist1} \Mk(d_k) &\le -\SPTthree \frac{\delta_k}{2} \| d_k \|^2.
	\end{flalign}
	\label{eq:subproblem-termination-criteria}
\end{subequations}
where $\SPTone$, $\SPTtwo$, and $\SPTthree$ are problem-independent scalars such that  $ 0 \leq \SPTone < \frac{1}{2} \big(1 - \frac{\beta \theta}{\SPTthree(1 - \beta)} \big)$, \RequireSPT, and  $\varepsilon_k$ is defined as
\begin{flalign}\label{def:vareps}
	\varepsilon_{k+1} := \begin{cases}  
		\| \grad f(x_1) \| & k = 0 \\
		\min\{ \varepsilon_{k},  \| \grad f(x_{k} + d_{k}) \| \} & f(x_{k} + d_{k}) \le f(x_{k}) + \GainThres_{k} \\
		\varepsilon_{k} & \text{otherwise.}
	\end{cases}
\end{flalign}

For our theory to hold, we assume $\GainThres_k$ is any sequence of positive numbers (chosen by the algorithm designer) such that there exists some problem-independent constant $\AlgConst \in (0,\infty)$ such that
\begin{flalign}\label{eq:gain-thres-lower-bound}
	\GainThres_k \ge \AlgConst \varepsilon_{k} \| d_{k} \|.
\end{flalign}
Note that $\GainThres_k = \infty$ is a valid choice, which corresponds to 
\[
\varepsilon_{k+1} = \begin{cases}  
	\| \grad f(x_1) \| & k = 0 \\
	\min\{ \varepsilon_{k},  \| \grad f(x_{k} + d_{k}) \| \} & \text{otherwise}.
\end{cases}
\]
This choice evaluates $\| \grad f(x_{k} + d_{k}) \|$ at every step, even if the step is not accepted, i.e., when $x_{k+1} = x_{k}$. This can be wasteful in terms of gradient evaluations. Choosing a smaller $\GainThres_k$ reduces the number of gradient evaluations because we may not have to compute $\| \grad f(x_{k} + d_{k}) \|$ when $f(x_{k}) \le f(x_{k} + d_k)$.
On the other hand, picking $\GainThres_k = 0$ would not only
break our theory, but in practice, we found it to be an undesirable choice. For instance, sometimes $\| \grad f(x_{k} + d_{k}) \| \le \epsilon$ where $\epsilon > 0$ is our termination tolerance, but $f(x_{k} + d_k)$ is slightly larger than $f(x_{k})$.  In this instance, a positive $b_k$ value may enable termination at $x_{k} + d_k$.
For our practical implementation, we pick 
$\GainThres_k =  \defaultAlgConst \varepsilon_{k} \| d_{k} \| + 10^{-8} (\abs{f(x_k)} + 1 )$, where the $10^{-8} (\abs{f(x_k)} + 1 )$ term is designed to mitigate arithmetic errors in evaluating the difference $f(x_{k+1}) - f(x_k)$. This is important because arithmetic errors are most likely to occur immediately prior to termination, i.e., when $\| \grad f(x_k + d_k) \| \le \epsilon$.

\edit{It is straightforward (e.g., \cite[Lemma 1]{hamad2022consistently}) to} demonstrate that solving the trust-region subproblem exactly provides a solution to the system \eqref{eq:subproblem-termination-criteria}, i.e., with $\SPTone=0$, $\SPTtwo=1$, and $\SPTthree=1$.
However, the converse is not true; an exact solution to \eqref{eq:subproblem-termination-criteria} does not guarantee a solution to the trust-region subproblem. Nonetheless, these conditions suffice for  our results \edit{and 
	are more computational tractable to verify than \eqref{eq:optimality-conditions-tr}  which due to \eqref{eq:PSD-grad-squared-f-delta} requires a computationally expensive eigenvalue calculation.}

\section{Global convergence bound for our method on \edit{functions with Lipschitz continuous Hessian}}\label{sec:main-result}

In this section, we provide convergence guarantees for our method in terms of iteration complexity for finding approximate stationary points on functions with $L$-Lipschitz Hessians. In particular, we prove that our method finds an $\epsilon$-approximate stationary point in at most $O(\Delta_f  L^{1/2} \epsilon^{-3/2}) + \tilde{O}(1)$ iterations. \possibleImprovement{
	This analysis relies on the assumption that the iterates and search directions satisfy 
	\Cref{assume:nice-directions}. 
	\Cref{eq:bounded-f-sequence} immediately holds if the function is bounded below.
	It is well-known that if the Hessian of $\grad^2 f$ is $L$-Lipschitz then \Cref{eq:Lip-grad1} and \eqref{eq:Lip-progress1} holds \citep[Lemma 1]{nesterov2006cubic}.  Later, in \Cref{sec:locally-lipschitz-second-derivatives}, we establish that under the weak assumption the Hessian is locally Lipschitz, either $\lim_{k \rightarrow \infty} f(x_k) = -\infty$ or $\lim_{k \rightarrow \infty} \varepsilon_k = 0 $.
	Notably, this does not require the iterates to be bounded.
	
	\begin{assumption}\label{assume:nice-directions}
		Suppose that for all $k \in \N$ that
		\begin{flalign}
			f(x_1) - f(x_k)&\le \Delta_f \label{eq:bounded-f-sequence} \\
			\| \grad f(x_k+d_k) \| &\le \| \grad \Mk(d_k) \| + \frac{L }{2} \| d_k \|^2  \label{eq:Lip-grad1}\\
			f(x_k + d_k) &\le f(x_k) + \Mk(d_k) + \frac{L}{6} \| d_k \|^3 \label{eq:Lip-progress1} 
		\end{flalign}
		for some $L \in (0,\infty)$ and $\Delta_f \in (0,\infty)$.
	\end{assumption}
}

\newcommand{\AlgorithmConstantCOne}{c_1} 
\newcommand{\AlgorithmConstantCTwo}{c_2}
We will use the following problem-independent constants throughout our proofs:
\edit{
	\begin{flalign}\label{eq:algorithm-constants-definition}
		\AlgorithmConstantCOne := \frac{2 + 3 \SPTthree (1 - \beta)}{6 (\SPTthree(1 - \beta) - \beta \theta)} \And \AlgorithmConstantCTwo := \frac{\SPTthree(1-\beta)}{\SPTthree(1-\beta) - \beta \theta}.
	\end{flalign}
}
Note that $\AlgorithmConstantCOne > \frac{1}{2}$ because $\AlgorithmConstantCOne = \frac{2 + 3 \SPTthree (1 - \beta)}{6 (\SPTthree(1 - \beta) - \beta \theta)} > \frac{3 \SPTthree (1 - \beta)}{6 \SPTthree(1 - \beta)} = \frac{1}{2}$. 
Additionally, $\AlgorithmConstantCTwo > 1$ because according to the requirements of Algorithm~\ref{alg:CAT}, we have $1 - \frac{\beta \theta}{\SPTthree(1 - \beta)} > 0$, which implies $\SPTthree (1 - \beta) > \SPTthree (1 - \beta) - \beta \theta > 0$. We will find these two facts useful in the proof of Lemma~\ref{lem:gradient-bound_distance}.

Lemma~\ref{lem:gradient-bound_distance} ensures that, under specific conditions, the norm of the gradient for the candidate solution $x_k + d_k$ provides a lower bound for the search direction $d_k$. Note this gradient bound, expressed in \eqref{eq:grad-bound-distance1}, remains valid without requiring knowledge of the Lipschitz constant of the Hessian $L$.

\begin{lemma} \label{lem:gradient-bound_distance}
	Suppose \edit{\Cref{assume:nice-directions} holds}. If
	$\| d_k \| < \SPTtwo r_k$ or \edit{$\hat{\rho}_k < \beta$} then
	\begin{flalign}\label{eq:grad-bound-distance1}
		\| \grad f(x_k + d_k) \| \le \AlgorithmConstantCOne L \|d_k\|^2 + 
		\AlgorithmConstantCTwo \| \grad \Mk (d_k) + \delta_k d_k \|.
	\end{flalign}
\end{lemma}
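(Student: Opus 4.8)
The plan is to bound $\| \grad f(x_k + d_k) \|$ by starting from the smoothness-type inequality \eqref{eq:Lip-grad1}, namely $\| \grad f(x_k+d_k) \| \le \| \grad \Mk(d_k) \| + \frac{L}{2}\|d_k\|^2$, and then controlling $\| \grad \Mk(d_k) \|$ through the inexact-stationarity residual. By the triangle inequality $\| \grad \Mk(d_k) \| \le \| \grad \Mk(d_k) + \delta_k d_k \| + \delta_k \|d_k\|$, so the whole problem reduces to bounding the single scalar $\delta_k \|d_k\|$ in terms of $L\|d_k\|^2$ and the residual $\| \grad \Mk(d_k) + \delta_k d_k \|$. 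I would first dispose of the degenerate case $d_k = 0$, for which \eqref{eq:grad-bound-distance1} is immediate because $c_2 > 1$ forces the right-hand side to dominate $\|\grad f(x_k)\|$, and then split on the two hypotheses.

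In the case $\| d_k \| < \gamma_2 r_k$, the complementarity condition \eqref{eq:comp-delta-radius1}, $\gamma_2 \delta_k r_k \le \delta_k \|d_k\|$, forces $\delta_k = 0$: if $\delta_k > 0$ it would give $\gamma_2 r_k \le \|d_k\|$, contradicting the hypothesis. With $\delta_k = 0$ the term $\delta_k \|d_k\|$ vanishes and $\grad \Mk(d_k) + \delta_k d_k = \grad \Mk(d_k)$, so \eqref{eq:Lip-grad1} already gives $\| \grad f(x_k+d_k) \| \le \frac{L}{2}\|d_k\|^2 + \| \grad \Mk(d_k) + \delta_k d_k \|$, and \eqref{eq:grad-bound-distance1} follows from the already-noted facts $c_1 > \frac{1}{2}$ and $c_2 > 1$.

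The case $\hat{\rho}_k < \beta$ is the crux, and here I would combine three ingredients. The progress bound \eqref{eq:Lip-progress1} gives $f(x_k) - f(x_k+d_k) \ge -\Mk(d_k) - \frac{L}{6}\|d_k\|^3$, while unpacking $\hat{\rho}_k < \beta$ from the definition of $\hat{\rho}_k$ gives $f(x_k) - f(x_k+d_k) < \beta\big(-\Mk(d_k) + \frac{\theta}{2}\min\{\|\grad f(x_k)\|, \|\grad f(x_k+d_k)\|\}\|d_k\|\big)$. Chaining these and collecting the $-\Mk(d_k)$ terms yields $(1-\beta)\,(-\Mk(d_k)) < \frac{L}{6}\|d_k\|^3 + \frac{\beta\theta}{2}\min\{\|\grad f(x_k)\|,\|\grad f(x_k+d_k)\|\}\|d_k\|$. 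Finally, the model-reduction condition \eqref{eq:model-reduction-by-dist1}, $-\Mk(d_k) \ge \gamma_3\frac{\delta_k}{2}\|d_k\|^2$, lets me replace the left side; dividing through by $\|d_k\| > 0$ and using $\min\{\|\grad f(x_k)\|,\|\grad f(x_k+d_k)\|\} \le \|\grad f(x_k+d_k)\|$ produces a bound of the form
\[
\delta_k \|d_k\| < \frac{L}{3(1-\beta)\gamma_3}\|d_k\|^2 + \frac{\beta\theta}{(1-\beta)\gamma_3}\,\|\grad f(x_k+d_k)\|,
\]
which I substitute into the reduction from the first paragraph.

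The main obstacle — and the reason the constants take their stated form — is closing the resulting self-reference: because the $\min$ term was bounded by $\|\grad f(x_k+d_k)\|$, the unknown reappears on the right-hand side with coefficient $a := \frac{\beta\theta}{(1-\beta)\gamma_3}$. The parameter restriction $0 \le \gamma_1 < \frac{1}{2}\big(1 - \frac{\beta\theta}{\gamma_3(1-\beta)}\big)$ guarantees $a < 1$, so I can move this term across and divide by $1 - a$. A short computation then shows $\frac{1}{1-a} = c_2$ exactly, and the accumulated $\|d_k\|^2$ coefficient $\frac{L}{1-a}\big(\frac{1}{3(1-\beta)\gamma_3} + \frac{1}{2}\big)$ simplifies — the factor $\gamma_3(1-\beta)$ cancels — to precisely $c_1 L$, giving \eqref{eq:grad-bound-distance1}. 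I expect the only delicate points to be verifying $a < 1$ from the parameter constraints and the bookkeeping that matches the coefficients to $c_1$ and $c_2$; everything else is triangle inequalities and direct substitution.
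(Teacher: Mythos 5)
Your proposal is correct and takes essentially the same route as the paper's proof: the same case analysis ($\|d_k\| < \SPTtwo r_k$ forcing $\delta_k = 0$ via \eqref{eq:comp-delta-radius1}, versus $\hat{\rho}_k < \beta$ chained with \eqref{eq:Lip-progress1} and \eqref{eq:model-reduction-by-dist1} to bound $\delta_k\|d_k\|$), the same self-referential inequality in $\|\grad f(x_k+d_k)\|$ resolved by the parameter restriction $\frac{\beta\theta}{\SPTthree(1-\beta)} < 1$, and the same constants $\AlgorithmConstantCOne, \AlgorithmConstantCTwo$. The only (cosmetic) difference is the degenerate case $d_k = 0$, which you dispatch directly from $\AlgorithmConstantCTwo > 1$ since the residual then equals $\grad f(x_k)$, whereas the paper invokes \eqref{eq:model-gradient-weaker} to conclude $\|\grad f(x_k)\| = 0$; both arguments are valid.
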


\begin{proof}
	\possibleImprovement{First, consider the case that 
		$\| d_k \| = 0$.
		In this case, \Cref{eq:model-gradient-weaker} implies that 
		$\| \grad f(x_k) \| =\| \grad \Mk (d_k) \| \le \SPTone \varepsilon_k \le \SPTone \| \grad f(x_k) \|$ which implies $\| \grad f(x_k) \| = 0$ and since $\| \grad f(x_k + d_k) \| = \| \grad f(x_k) \|$ the lemma holds.
		Thus, for the remainder of the proof we assume $\| d_k \| \neq 0$.
		
		Next,} we prove the result in the case $\|d_k\| < \SPTtwo r_k$. By \eqref{eq:comp-delta-radius1} the statement $\| d_k \| < \SPTtwo r_k$ implies $\delta_k = 0$. Thus,
	\[
	\| \grad f(x_k + d_k) \| \underle{(a)} \| \grad \Mk (d_k) + \delta_k d_k \| + \frac{L}{2} \|d_k\| ^ 2 \underle{(b)} \AlgorithmConstantCOne L \|d_k\| ^ 2 + \AlgorithmConstantCTwo \| \grad \Mk (d_k) + \delta_k d_k \|
	\]
	where inequality $(a)$ uses \eqref{eq:Lip-grad1} with $\delta_k = 0$, and inequality $(b)$ uses that $\AlgorithmConstantCOne > 1/2$ and $\AlgorithmConstantCTwo > 1$ as per the discussion following their definitions. Next, we prove the result for the case that \edit{$ \hat{\rho}_k < \beta$.} Observe that
	\begin{flalign*}
		&\beta \left( -\Mk(d_k) + \frac{\theta}{2}\min\{\|\grad f(x_k)\|, \|\grad f(x_k + d_k) \| \}| \| d_k \|\right) \\
		&\edit{\undergreater{(a)}} \hat{\rho}_k  \left( -\Mk(d_k) + \frac{\theta}{2} \min\{\|\grad f(x_k)\|, \|\grad f(x_k + d_k) \| \} \| d_k \| \right) \\
		&\undereq{(b)} f(x_k) - f(x_k + d_k) \edit{\undergreater{(c)}} -\Mk(d_k) - \frac{L}{6} \| d_k \|^3
	\end{flalign*}
	where inequality (a) \edit{uses $-\Mk(d_k) + \frac{\theta}{2} \min\{\|\grad f(x_k)\|, \|\grad f(x_k + d_k) \| \} \| d_k \| \ge 0$ and $\hat{\rho}_k < \beta$}, equality (b) uses the definition of $\hat{\rho}_k$ in \eqref{eq:rho-hat-k}, and inequality (c) uses \eqref{eq:Lip-progress1}.
	Rearranging the previous displayed inequality and then applying \eqref{eq:model-reduction-by-dist1} using $1 - \beta > 0$ yields:
	\begin{flalign*}
		\frac{L}{6} \| d_k \|^3 + \frac{\beta \theta}{2}\min\{\|\grad f(x_k)\|, \|\grad f(x_k + d_k) \| \} \|d_k\|
		&\ge -(1-\beta) \Mk(d_k) \\
		& \ge (1-\beta) \SPTthree \frac{ \delta_k}{2} \| d_k \| ^ 2.
	\end{flalign*}
	Dividing this inequality by $\frac{(1-\beta) \|d_k\|}{2}$, using $1-\beta > 0$, and then using \linebreak $ \|\grad f(x_k + d_k) \| \geq \min\{\|\grad f(x_k)\|, \|\grad f(x_k + d_k) \| \}$, we get 	\begin{flalign}\label{eq:lemma13:0328j}
		\frac{L}{3 (1 - \beta)} \| d_k \|^2 + \frac{\beta \theta}{1 - \beta} \| \grad f(x_k + d_k) \|
		\ge \SPTthree \delta_k \| d_k \|.
	\end{flalign}	 
	Now, by \eqref{eq:Lip-grad1}, the triangle inequality, and \eqref{eq:lemma13:0328j} respectively:
	\begin{flalign*}
		&\| \grad f(x_k + d_k) \| \le \| \grad \Mk(d_k) \| + \frac{L}{2} \| d_k \|^2  \le \| \grad \Mk(d_k) + \delta_k d_k \| + \delta_k \| d_k \| + \frac{L}{2} \| d_k \|^2 \\
		&\le \| \grad \Mk(d_k) + \delta_k d_k \|  + L \left( \frac{1}{3 \SPTthree(1 - \beta)} + \frac{1}{2} \right) \| d_k \|^2 + \frac{\beta \theta}{\SPTthree(1 - \beta)} \| \grad f(x_k + d_k) \|.
	\end{flalign*} Rearranging this inequality for $\| \grad f(x_k + d_k) \|$ and using $\frac{\beta \theta}{\SPTthree(1 - \beta)}  < \frac{\beta \theta}{\SPTthree(1 - \beta)} + 2 \SPTone < 1$ from the requirements of Algorithm~\ref{alg:CAT} yields:
	\begin{flalign*}
		\| \grad f(x_k + d_k) \|
		&\le \frac{1}{1 - \frac{\beta \theta}{\SPTthree(1 - \beta)}} \| \grad \Mk(d_k) + \delta_k d_k \| + \frac{\frac{1}{3 \SPTthree (1 - \beta)} + \frac{1}{2}}{1 - \frac{\beta \theta}{\SPTthree(1 - \beta)}} L \| d_k \|^2 \\
		&= \frac{\SPTthree(1 - \beta)}{\SPTthree(1 - \beta) - \beta \theta} \| \grad \Mk(d_k) + \delta_k d_k \| + \frac{2 + 3 \SPTthree (1 - \beta)}{6 (\SPTthree (1 - \beta) - \beta \theta)} L\| d_k \|^2 \\
		&= \AlgorithmConstantCOne L \|d_k\| ^ 2 + \AlgorithmConstantCTwo \| \grad \Mk(d_k) + \delta_k d_k \|.
	\end{flalign*}
\end{proof}
For the remainder of this section, we will find the following quantities useful:
\begin{align}
	\label{eq:constant-algorithm-definition}
	\TheoryAlgConst &:= \max\left\{\frac{1}{3\AlgConst}, \frac{2 \AlgorithmConstantCOne}{1 - 2 \SPTone \AlgorithmConstantCTwo} \right\} \\
	&\dminInd{k} := \SPTtwo \omega_1 ^ {-1}\TheoryAlgConst^{-1/2} L^{-1/2} \varepsilon_k^{1/2}  \label{d-min-def}\\
	&\dmaxInd{k} := \frac{2 \omega_2}{\beta \theta} \cdot \frac{\Delta_f}{\varepsilon_k} \label{d-max-def}
\end{align}
\edit{where $\AlgorithmConstantCOne$ and $\AlgorithmConstantCTwo$ are problem-independent constants defined in~\eqref{eq:algorithm-constants-definition} and $\AlgConst$ is also a problem-independent constant defined in~\eqref{eq:gain-thres-lower-bound}.}

\edit{By the definition of $\varepsilon_k$, we have that $\varepsilon_{k+1} \le \varepsilon_k$ for all $k \in \N$, which implies that the sequence $\{\dminInd{k}\}$ is monotonically decreasing, while the sequence $\{\dmaxInd{k}\}$ is monotonically increasing. These monotonicity properties will be used later in the proof.
}

Lemma~\ref{lem:radius-and-direction-bounds} translates Lemma~\ref{lem:gradient-bound_distance} into explicit bounds on the trust-region radius and search direction sizes. 

\begin{lemma}\label{lem:radius-and-direction-bounds}
	Suppose $k \in \N$, and $j \in [k,\infty) \cap \N$, then
	\begin{enumerate}[label=(\roman*)]
		\item If $f(x_1) - f(x_k) < \Delta_f$ then $\omega_1^{k - j} r_k \le r_j \le \max\{r_k, \dmaxInd{j+1}\}$. \label{lem:radius-and-direction-bounds:part-general-radius-bounds-part1}
		\item We have $r_j \leq r_k \omega_1 ^ {-n_j} \omega_2 ^ {p_j}$ where $p_j := \abs{ \{ m \in [k, j) : \hat{\rho}_m \geq \beta \}}$ and $n_j := \abs{\{m \in [k, j) : \hat{\rho}_m < \beta \}}$.\label{lem:radius-and-direction-bounds:part-general-radius-bounds-part2}
		\item If \Cref{assume:nice-directions} holds and $\varepsilon_{j+1} \geq \frac{\varepsilon_k}{2}$ then $\min\{ r_k \SPTtwo(\omega_2 \SPTtwo)^{j-k}, \dminInd{k}  \} \le \| d_j \| $.
		\label{lem:radius-and-direction-bounds:large-gradient}
	\end{enumerate}
\end{lemma}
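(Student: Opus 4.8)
The plan is to handle the three parts in increasing order of difficulty, since part~(ii) and the lower bound in~(i) follow purely from the radius update rule while~(iii) carries the real content. For the lower bound $\omega_1^{k-j}r_k \le r_j$ I would only observe that $r_{m+1}\ge r_m/\omega_1$ at every step (with equality on unsuccessful steps and $r_{m+1}\ge r_m$ on successful ones) and chain this from $k$ to $j$. For part~(ii), the constraint $\|d_m\|\le r_m$ gives $\max\{\omega_2\|d_m\|, r_m\}\le \omega_2 r_m$ on successful steps, so each successful step multiplies the radius by at most $\omega_2$ and each unsuccessful step by exactly $\omega_1^{-1}$; a one-line induction then yields $r_j \le r_k\omega_1^{-n_j}\omega_2^{p_j}$.

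For the upper bound in~(i) I would argue via the last radius increase. Assuming $r_j > r_k$, let $m\in[k,j)$ be the largest index with $r_{m+1}>r_m$. Such a strict increase occurs only on a successful step, where $r_{m+1}=\omega_2\|d_m\|$, and maximality of $m$ ensures the radius never rises again, so $r_j \le \omega_2\|d_m\|$. Because step $m$ is successful, the definition of $\hat\rho_m$ forces $x_{m+1}=x_m+d_m$ with $f(x_m)-f(x_{m+1}) \ge \frac{\beta\theta}{2}\min\{\|\grad f(x_m)\|,\|\grad f(x_m+d_m)\|\}\|d_m\|$. An auxiliary monotonicity fact $\varepsilon_\ell \le \|\grad f(x_\ell)\|$ for all $\ell$ (a short induction over the three cases defining $\varepsilon$) then lower bounds the minimum by $\varepsilon_{m+1}\ge\varepsilon_{j+1}$, and since $f(x_m)-f(x_{m+1}) \le f(x_1)-\inf f \le \Delta_f$, rearranging gives $r_j\le \frac{2\omega_2\Delta_f}{\beta\theta\,\varepsilon_{j+1}}=\dmaxInd{j+1}$.

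Part~(iii) is the crux. The hypothesis $\varepsilon_{j+1}\ge\varepsilon_k/2$ propagates to $\varepsilon_m\ge\varepsilon_k/2$ for all $m\in[k,j+1]$ by monotonicity. The engine is the per-step estimate $\|d_m\| \ge \min\{\SPTtwo r_m, \widehat d\}$, with $\widehat d := \TheoryAlgConst^{-1/2}L^{-1/2}\varepsilon_k^{1/2}$, for every $m\in[k,j]$. If Lemma~\ref{lem:gradient-bound_distance} does not apply at $m$, then by the negation of its hypothesis $\|d_m\|\ge\SPTtwo r_m$. If it does apply, I would show $\|d_m\|\ge\widehat d$ by splitting on the update of $\varepsilon$: when $f(x_m+d_m)\le f(x_m)+\GainThres_m$ one has $\|\grad f(x_m+d_m)\|\ge\varepsilon_{m+1}\ge\varepsilon_k/2$, and inserting this together with $\|\grad M_m(d_m)+\delta_m d_m\|\le\SPTone\varepsilon_m\le\SPTone\varepsilon_k$ into \eqref{eq:grad-bound-distance1} yields $\AlgorithmConstantCOne L\|d_m\|^2\ge\frac{\varepsilon_k}{2}(1-2\AlgorithmConstantCTwo\SPTone)$, which by $\TheoryAlgConst\ge\frac{2\AlgorithmConstantCOne}{1-2\SPTone\AlgorithmConstantCTwo}$ gives $\|d_m\|\ge\widehat d$ (positivity of $1-2\SPTone\AlgorithmConstantCTwo$ is exactly the algorithm's requirement on $\SPTone$); when instead $f(x_m+d_m)>f(x_m)+\GainThres_m$, combining $\GainThres_m\ge\AlgConst\varepsilon_m\|d_m\|$ with \eqref{eq:Lip-progress1} and $M_m(d_m)\le0$ (from \eqref{eq:model-reduction-by-dist1}) gives $\AlgConst\varepsilon_m<\frac{L}{6}\|d_m\|^2$, hence $\|d_m\|>\sqrt{3\AlgConst\varepsilon_k/L}\ge\widehat d$ by $\TheoryAlgConst\ge\frac{1}{3\AlgConst}$.

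Finally I would convert this into a radius bound by induction, exploiting the identity $\widehat d=\dminInd{k}\,\omega_1/\SPTtwo$: I claim $r_m\ge\min\{r_k(\omega_2\SPTtwo)^{m-k},\,\dminInd{k}/\SPTtwo\}$ for $m\in[k,j]$. On a successful step $r_{m+1}\ge\omega_2\min\{\SPTtwo r_m,\widehat d\}$, and the inequalities $\omega_2\SPTtwo\ge1$ and $\omega_2\widehat d\ge\dminInd{k}/\SPTtwo$ preserve the bound; on an unsuccessful step Lemma~\ref{lem:gradient-bound_distance} applies, so $r_m\ge\|d_m\|\ge\widehat d$ and $r_{m+1}=r_m/\omega_1\ge\widehat d/\omega_1=\dminInd{k}/\SPTtwo$. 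Invoking the per-step estimate once more at $m=j$ and substituting this radius bound collapses everything to $\|d_j\|\ge\min\{r_k\SPTtwo(\omega_2\SPTtwo)^{j-k},\dminInd{k}\}$. The main obstacle is precisely this part~(iii): producing a single clean constant $\widehat d$ from the two-way split on the $\varepsilon$-update (the $f$-increasing branch is what forces the $\frac{1}{3\AlgConst}$ term in $\TheoryAlgConst$), and carrying out the radius induction so that both the geometric factor $\omega_2\SPTtwo>1$ and the threshold $\dminInd{k}/\SPTtwo$ survive the $\omega_1^{-1}$ contraction on unsuccessful steps, which is exactly where the parameter restrictions $\SPTtwo\in(1/\omega_1,1]$ and $\omega_2\ge\omega_1$ are essential.
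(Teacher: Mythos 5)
Your proposal is correct and takes essentially the same route as the paper's proof: parts (i)--(ii) follow the same update-rule inductions (your ``last radius increase'' extremal argument for the upper bound in (i) is a cosmetic variant of the paper's one-step induction, resting on the identical contradiction between $\hat{\rho}_m \ge \beta$, the bound $\min\{\|\grad f(x_m)\|,\|\grad f(x_m+d_m)\|\} \ge \varepsilon_{m+1}$ from \eqref{def:vareps}, and $\Delta_f$), and part (iii) uses the same per-step dichotomy built from Lemma~\ref{lem:gradient-bound_distance} plus the two-way split whose $f$-increasing branch is handled by $\GainThres_m \ge \AlgConst \varepsilon_m \|d_m\|$ and \eqref{eq:Lip-progress1} --- this is exactly the paper's claim \eqref{eq:directions-increase-decrease-general-part-1}, with your threshold $\hat{d}$ equal to the paper's $\omega_1 \SPTtwo^{-1} \dminInd{k}$ and the same role for $\TheoryAlgConst$'s two terms. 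The only organizational differences --- splitting on the $\varepsilon$-update rather than on whether $\|\grad f(x_m+d_m)\| < \varepsilon_{m+1}$ (equivalent via \eqref{def:vareps}), and running the closing induction on the radius invariant $r_m \ge \min\{r_k(\omega_2\SPTtwo)^{m-k}, \dminInd{k}/\SPTtwo\}$ instead of directly on $\|d_m\|$ as the paper does --- rely on the same parameter requirements ($\SPTtwo > 1/\omega_1$, $\omega_2 \ge \omega_1$) and deliver the same bound.
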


\begin{proof}
	\textbf{Proof for part \ref{lem:radius-and-direction-bounds:part-general-radius-bounds-part1}.}
	We will first show $\omega_1 ^ {k - j} r_k \le r_j$. Consider the induction hypothesis that if $j \geq k$ then $r_j \geq \omega_1 ^ {k - j} r_k$. The hypothesis holds for the base case when $j = k$. Next, suppose that the hypothesis holds for some $j = t$. Then
	\[
	r_{t + 1} \underge{(a)} \frac{r_t}{\omega_1}  \underge{(b)} \omega_1 ^ {k - t} r_k \omega_1 ^ {-1} = \omega_1 ^ {k - (t + 1)} r_k
	\]
	where inequality (a) uses the radius update rule of Algorithm~\ref{alg:CAT} and inequality (b) uses the induction hypothesis. Therefore, the induction hypothesis holds for $j = t + 1$. By induction our claim holds. 
	
	Next, we show that $r_{j} \le \max\{r_k, \dmaxInd{j+1}\}$. To prove this result it will suffice to establish that $r_{j+1} \le \max\{r_j, \dmaxInd{j+1}\}$ and the result will follow by induction. If $\|d_j\| \le \dmaxInd{j+1} / \omega_2$  then using the update rule of Algorithm~\ref{alg:CAT}, \edit{we get that}
	$r_{j+1} \leq \max\{ \omega_2 \| d_j \|,  r_j \} \leq \max\{\dmaxInd{j+1}, r_j\}$. If $x_{j+1} = x_j$, then by inspection of Algorithm \ref{alg:CAT} we have $r_{j+1} = r_j / \omega_1 \le \max\{\dmaxInd{j+1}, r_j\}$.
	Thus the only remaining case to consider is $x_{j+1} = x_j + d_j$ and $\|d_j\| > \dmaxInd{j+1} / \omega_2$.
	In this case we have
	\begin{flalign*}
		\hat{\rho}_j &= \frac{f(x_j) - f(x_j + d_j)}{-\Mk(d_j) + \frac{\theta}{2} \min\{\|\grad f(x_j)\|, \|\grad f(x_j + d_j) \| \} \| d_j \|} \\ &\underle{(a)} \frac{f(x_j) - f(x_{j+1})}{\frac{\theta}{2} \min\{\|\grad f(x_j)\|, \|\grad f(x_j + d_j) \| \} \| d_j \|}\\ &\underle{(b)} 2\frac{f(x_j) - f(x_{j+1})}{\theta \varepsilon_{j+1} \| d_j \|} \underle{(c)} 2\frac{\Delta_f}{ \theta\varepsilon_{j+1} \| d_j \|} < 2\frac{\Delta_f \omega_2}{ \theta\varepsilon_{j+1} \dmaxInd{j+1}} \undereq{(d)} \beta
	\end{flalign*}
	where inequality $(a)$ follows from $-\Mk(d_j) \ge 0$ and $ f(x_{j+1}) \le f(x_j + d_j)$, inequality $(b)$ follows from the fact that $\min\{\|\grad f(x_j)\|, \|\grad f(x_j + d_j) \| \} \ge \varepsilon_{j+1}$ \edit{due to~\eqref{def:vareps}}, inequality $(c)$ uses \edit{\eqref{eq:bounded-f-sequence}} and $f(x_1) \ge f(x_j)$, and
	equality $(d)$ uses the definition of $\dmaxInd{j+1}$.
	Thus, $\hat{\rho}_j < \beta$, and by inspection of Algorithm \ref{alg:CAT} we have $r_{j+1} = r_j  / \omega_1 \leq \max\{r_j,  \dmaxInd{j+1}\}$ as desired.
	
	\textbf{Proof for part \ref{lem:radius-and-direction-bounds:part-general-radius-bounds-part2}.}
	Consider the induction hypothesis that if $j \geq k$ then $r_j \le r_k \omega_1^{-n_j} \omega_2^{p_j}$. The hypothesis holds for the base case when $j = k$, because $p_j = n_j = 0$. Next, suppose that the hypothesis holds for some $j = t$. If $\hat{\rho}_{t} \geq \beta$, then 
	\[
	r_{t+1} \undereq{(a)} \max\{\omega_2 \| d_t \|, r_t\} \underle{(b)} \omega_2 r_t \underle{(c)} r_k \omega_1^{-n_t} \omega_2^{1+p_t} \undereq{(d)} r_k \omega_1^{-n_{t+1}} \omega_2^{p_{t+1}}
	\]
	where equality (a) uses the radius update rule of Algorithm~\ref{alg:CAT}, inequality (b) uses $\|d_t\| \leq r_t$, inequality (c) uses the induction hypothesis, and equality (d) uses $p_{t+1} = p_t + 1$ and $n_{t+1} = n_t$ because $\hat{\rho}_{t} \geq \beta$. On the other hand, if $\hat{\rho}_{t} < \beta$, then
	\[
	r_{t+1} \undereq{(a)} r_t / \omega_1 \underle{(b)} r_k \omega_1^{-n_t-1} \omega_2^{p_t} \undereq{(c)} r_k\omega_1^{-n_{t+1}} \omega_2^{p_{t+1}}
	\]
	where equality (a) uses the radius update rule of Algorithm~\ref{alg:CAT}, inequality (b) uses the induction hypothesis, and equality (c) uses $p_{t+1} = p_t$ and $n_{t+1} = n_t + 1$ because $\hat{\rho}_{t} < \beta$. Therefore, the induction hypothesis holds for $j = t + 1$. By induction, our claim holds.
	
	\textbf{Proof for part \ref{lem:radius-and-direction-bounds:large-gradient}}.
	To prove this result, we first show the following useful claim:
	\begin{flalign}\label{eq:directions-increase-decrease-general-part-1}
		\text{If $\|d_j\| < \SPTtwo r_j$ or $r_{j+1} < \omega_2 \|d_j\|$ then $\| d_j \| \geq \omega_1 \SPTtwo ^{-1} \dminInd{k}$}.
	\end{flalign}
	First, consider the case that $\| \grad f(x_j + d_j)\| < \varepsilon_{j+1}$. Then,
	\[
	\frac{\TheoryAlgConst^{-1} \varepsilon_k \| d_j \|}{6} \underle{(a)} 
	\frac{\AlgConst\varepsilon_k}{2} \|d_j\| \underle{(b)} 
	\AlgConst \varepsilon_{j + 1} \|d_j\| \le \AlgConst \varepsilon_{j} \| d_{j} \| \underle{(c)} 
	\GainThres_j \underless{(d)}  f(x_j + d_j) - f(x_j) \underle{(e)} 
	\frac{L \| d_j \|^3}{6}
	\]
	where inequality $(a)$ uses \eqref{eq:constant-algorithm-definition}, inequality (b) uses the assumption that $\varepsilon_{j+1} \geq \frac{\varepsilon_k}{2}$, inequality (c) uses \eqref{eq:gain-thres-lower-bound},
	inequality $(d)$ uses $\| \grad f(x_j + d_j)\| < \varepsilon_{j+1}$ and the definition of $\varepsilon_{j+1}$ as given in \eqref{def:vareps}, and inequality (e) uses $M_j(d_j) \le 0$ and \eqref{eq:Lip-progress1}. Rearranging this inequality yields $\| d_j \| \ge \sqrt{ \varepsilon_k \TheoryAlgConst^{-1} L ^ {-1}}  = \omega_1 \SPTtwo ^{-1} \dminInd{k}$ as desired.
	
	Next, consider the case that $\| \grad f(x_j + d_j)\| \ge \varepsilon_{j+1}$.  If $\|d_j\| < \SPTtwo r_j$ then the premise of Lemma~\ref{lem:gradient-bound_distance} holds. On the other hand, if $r_{j+1} < \omega_2 \| d_j \|$ then by inspection of Algorithm~\ref{alg:CAT} we have $\hat{\rho}_j < \beta$. Therefore, either $\|d_j\| < \SPTtwo r_j$ or $r_{j+1} < \omega_2 \|d_j\|$ imply the premise of Lemma~\ref{lem:gradient-bound_distance} holds. 
	It follows that 
	\begin{flalign}\label{eq:varepsilon-k-dj-L}
		\frac{\varepsilon_k}{2} \underle{(a)} \varepsilon_{j+1} \underle{(b)} \|\grad f(x_j + d_j)\| \underle{(c)} \AlgorithmConstantCOne L \|d_j\| ^ 2 + \SPTone \AlgorithmConstantCTwo  \varepsilon_j \underle{(d)} \AlgorithmConstantCOne L \|d_j\| ^ 2 +  \SPTone \AlgorithmConstantCTwo \varepsilon_k
	\end{flalign}
	where inequality $(a)$ is by the assumption of the Lemma, inequality $(b)$ uses that we are analyzing the case that $\|\grad f(x_j + d_j)\| \ge \varepsilon_{j+1}$, inequality $(c)$ uses Lemma~\ref{lem:gradient-bound_distance} and \eqref{eq:model-gradient-weaker}, and inequality $(d)$ uses that $\varepsilon_j \le \varepsilon_k$ by definition of $\varepsilon_j$.
	
	According to the requirements of Algorithm~\ref{alg:CAT}: $0 \leq \SPTone < \frac{1}{2} (1 - \frac{\beta \theta}{\SPTthree (1 - \beta)}) =  \frac{\SPTthree (1 - \beta) - {\beta \theta}}{2 \SPTthree (1 - \beta)}$, it follows that
	\[ 
	\SPTone \AlgorithmConstantCTwo = \SPTone \cdot \frac{\SPTthree (1 - \beta)}{\SPTthree (1 - \beta) - \beta \theta} < \frac{\SPTthree (1 - \beta) - \beta \theta}{2 \SPTthree (1 - \beta)} \cdot \frac{\SPTthree (1 - \beta)}{\SPTthree (1 - \beta) - \beta \theta} = \frac{1}{2}.
	\]
	Thus, we can rearrange \eqref{eq:varepsilon-k-dj-L} to yield
	\begin{flalign*}
		\varepsilon_k \leq \frac{\AlgorithmConstantCOne L}{1/2 - \gamma_1 \AlgorithmConstantCTwo} \| d_j \|^2 = \frac{2 \AlgorithmConstantCOne}{1 - 2 \SPTone \AlgorithmConstantCTwo} L \| d_j \|^2 \le \TheoryAlgConst L \| d_j \|^2
	\end{flalign*}
	which implies $ \| d_j \| \ge \TheoryAlgConst^{-1/2} L^{-1/2} \varepsilon_k ^{1/2} = \SPTtwo ^ {-1} \omega_1 (\SPTtwo \omega_1 ^ {-1} \TheoryAlgConst^{-1/2} L^{-1/2} \varepsilon_k ^{1/2}) = \omega_1 \SPTtwo ^ {-1} \dminInd{k}$ and thus concludes the proof of \eqref{eq:directions-increase-decrease-general-part-1}.
	
	With \eqref{eq:directions-increase-decrease-general-part-1} in hand, we will now prove part~\ref{lem:radius-and-direction-bounds:large-gradient} by induction.
	First, consider the base case when $j = k$.
	If $\gamma_2 r_k \le \| d_k \|$ then $\min\{ r_k \SPTtwo(\omega_2 \SPTtwo)^{j-k}, \dminInd{k}  \} = \min\{ r_k \SPTtwo, \dminInd{k}  \}  \le \gamma_2 r_k \le \| d_j \|$.
	On the other hand, if $\gamma_2 r_k > \| d_k \|$ then
	\[
	\min\{ r_k \SPTtwo(\omega_2 \SPTtwo)^{j-k}, \dminInd{k}  \} = \min\{ r_k \SPTtwo, \dminInd{k}  \}  \le \dminInd{k} \underle{(a)} \omega_1 \SPTtwo ^{-1} \dminInd{k} \underle{(b)} \| d_j \|
	\] 
	where inequality $(a)$ uses that $\omega_1 \SPTtwo ^{-1} \ge 1$ from the requirements of Algorithm~\ref{alg:CAT} and inequality $(b)$ uses \eqref{eq:directions-increase-decrease-general-part-1} with $j=k$. This establishes the base case.
	
	Next, suppose that the induction hypothesis $\min\{ r_k \SPTtwo(\omega_2 \SPTtwo)^{j-k}, \dminInd{k}  \} \le \| d_j \|$ holds for $j=t-1$ and $\varepsilon_{t+1} \ge \frac{\varepsilon_{k}}{2}$.
	We will split the proof of showing that the induction hypothesis holds for $j=t$ into three different cases.
	In the
	case that $\| d_{t} \| < \gamma_2 r_{t}$ then by \eqref{eq:directions-increase-decrease-general-part-1} with $j = t$ we get $\| d_{t} \| \ge \omega_1 \SPTtwo ^{-1} \dminInd{k} \ge \dminInd{k}$ as desired.        
	In the case that $\| d_t \| \ge \SPTtwo r_t$ and $r_{t} < \omega_2 \| d_{t-1} \|$ we have 
	\[ 
	\| d_t \| \ge \SPTtwo r_{t} \underge{(a)} \SPTtwo \omega_1^{-1} r_{t-1} \ge \SPTtwo \omega_1^{-1} \| d_{t-1} \| \underge{(b)} \dminInd{k}
	\]
	where inequality $(a)$ is from the update rule for Algorithm~\ref{alg:CAT} which implies $r_t \ge \frac{r_{t-1}}{\omega_1}$ and inequality $(b)$ is from \eqref{eq:directions-increase-decrease-general-part-1} with $j = t-1$.
	Finally, in the case that $\| d_t \| \ge \SPTtwo r_t$ and $r_{t} \ge \omega_2 \| d_{t-1} \|$ we have 
	\begin{flalign*}
		\| d_t \| \ge \SPTtwo r_t \ge \SPTtwo \omega_2 \| d_{t-1} \| &\underge{(a)} \SPTtwo \omega_2 \min\{ r_{k} \SPTtwo(\omega_2 \SPTtwo)^{t-1-k}, \dminInd{k}  \}\\ &\underge{(b)} \min\{ r_{k} \SPTtwo(\omega_2 \SPTtwo)^{t-k}, \dminInd{k}  \}
	\end{flalign*} 
	where inequality $(a)$ uses the induction hypothesis and inequality $(b)$ uses $\SPTtwo \omega_2 \ge \SPTtwo \omega_1 > 1$ by the requirements of Algorithm~\ref{alg:CAT}.   
	Thus $\min\{ r_k \SPTtwo(\omega_2 \SPTtwo)^{j-k}, \dminInd{k}  \} \le \| d_j \|$ holds for $j=t$ and by induction we have proven part \ref{lem:radius-and-direction-bounds:large-gradient}.
\end{proof} 
Now, we define specific indexes that will be useful for our analysis. 
\edit{In particular,} 
\[
\pi(i) := \begin{cases}
	1 & i = 1 \\
	\min ~ \{ j \in \N : \varepsilon_{j} < \varepsilon_{\pi(i-1)} / 2  \} \cup \{ \infty \}  & i > 1
\end{cases}
\]  
\edit{which represents the first index such that $\varepsilon_{j}$ has decreased by a factor of two since index $\pi(i-1)$. Thus, $\varepsilon_{\pi(i)} \le 2^{1-i} \varepsilon_1$. We also define} 
\[
\tau(i) := \min ~  \{ j \in \N : j \ge \pi(i) ~\&~ \| d_j \| \ge \dminInd{\pi(i)}  \} \cup \{ \pi(i+1)\}
\]
\edit{which represents the first index following $\pi(i)$ with distance to optimality greater than or equal to $\dminInd{\pi(i)}$}.
We note that these definitions do not exclude the possibility that $\pi(i)$ or $\tau(i)$ are infinite. 
However, in our proofs, we will show that they are finite. 

Using that $\SPTtwo \omega_2 > 1$ from the requirements for Algorithm~\ref{alg:CAT} and that $j + 1 \in  [\pi(i), \pi(i+1) - 1] \cap \N \implies \varepsilon_{j+1} \ge \varepsilon_{\pi(i)} / 2$ by the definition of $\pi$, we can rewrite Lemma~\ref{lem:radius-and-direction-bounds}.\ref{lem:radius-and-direction-bounds:large-gradient} with $k = \pi(i)$ in terms of $\pi$ and $\tau$ as
\begin{subequations}
	\begin{flalign}\label{eq:min-growing-direction-and-lower-bound}
		\|d_{j}\| &\geq \min\{ \SPTtwo (\SPTtwo \omega_2) ^ {j - \pi(i)} r_{\pi(i)} , \dminInd{\pi(i)}  \} &  \forall j \in [\pi(i), \pi(i+1)-2] \\
		\|d_{j}\| &\geq  \dminInd{\pi(i)} & \forall j \in [\tau(i), \pi(i+1)-2] \label{eq:lower-bounded-direction-v2}
	\end{flalign}
\end{subequations}
which we will find useful for our proofs.

Using the definition of $\pi(i)$ and $\tau(i)$, we define the following phases. The first phase considers iterations between $\pi(i)$ and  $\tau(i)$ where the search direction is decreasing as per \eqref{eq:min-growing-direction-and-lower-bound} until the search direction size goes below the threshold $\dminInd{\pi(i)}$. Consequently, we can bound the number of iterations in phase one (Lemma~\ref{lem:increase-radius-index-small-distance}).
The second phase considers iterations between $\tau(i)$ and $\pi(i+1)$ where the search direction remains above the threshold $\dminInd{\pi(i)}$, i.e., due to \eqref{eq:lower-bounded-direction-v2}. Consequently, any successful steps during this phase, i.e., steps with $\hat{\rho}_k > \beta$, will reduce the function value by at least \linebreak $\beta \left(-\Mk(d_k) + \frac{\theta}{2} \min\{\|\grad f(x_k)\|, \|\grad f(x_k + d_k) \| \} \| d_k \|\right) \le \frac{\beta \theta \varepsilon_{\pi(i)} \dminInd{\pi(i)}}{2}$. 
Next, using the results from Lemma~\ref{lem:radius-and-direction-bounds}.\ref{lem:radius-and-direction-bounds:part-general-radius-bounds-part2}, we argue that the number of unsuccessful steps inside this interval cannot exceed the number of successful steps.
This allows us to bound the number of iterations in phase two (Lemma~\ref{lem:main-fully-adaptive-result}).
The iterates of Algorithm~\ref{alg:CAT} will alternate between phase one and phase two repeatedly until termination. However, according to the definition of $\tau(i)$, the iterates may transition directly to phase two without passing through phase one, i.e., $\tau(i) = \pi(i)$.

To bound the total number of iterations to terminate, we show that there exists $N \in \N$ such that $\varepsilon_{\pi(N+1)} \le \epsilon \le \varepsilon_{\pi(N)}$. Thus, the total number of iterations
until termination with $\varepsilon_k \le \epsilon$, where $\epsilon > 0$ is the termination tolerance, is at most 
\begin{flalign}\label{eq:decompose-piN+1}
	\pi(N+1)  = \pi(1) +  \sum_{i=1}^{N} \pi(i+1) - \tau(i) + \tau(i) - \pi(i).
\end{flalign}
To this inequality, we employ our bounds  on $\tau(i) - \pi(i)$ and $\pi(i + 1) - \tau(i)$ given in Lemma~\ref{lem:increase-radius-index-small-distance} and Lemma~\ref{lem:main-fully-adaptive-result}  to provide a bound on the total number of iterations (Theorem~\ref{thm:main-fully-adaptive-result}). 
\begin{lemma}\label{lem:increase-radius-index-small-distance}
	Suppose \edit{\Cref{assume:nice-directions} holds}, then for all $i \in \N$ such that $\pi(i) < \infty$ we have $\tau(i) < \infty$, and
	\begin{flalign}\label{eq:bound-recovery-phase}
		\tau(i) - \pi(i) \le \log_{\SPTtwo \omega_2} \left(\omega_2^{3i} \max\left\{\frac{\dminInd{\pi(i)}}{r_1}, 1 \right\}\right) .
	\end{flalign}
\end{lemma}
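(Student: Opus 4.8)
The plan is to convert the index gap $\tau(i)-\pi(i)$ into a logarithm of $\dminInd{\pi(i)}/r_{\pi(i)}$ via the geometric lower bound \eqref{eq:min-growing-direction-and-lower-bound}, and then to eliminate the dependence on $r_{\pi(i)}$ by proving a lower bound on the trust-region radius of the form $r_{\pi(i)} \ge \omega_2^{-(i-1)}\min\{r_1,\SPTtwo^{-1}\dminInd{\pi(i)}\}$.

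First I would carry out the geometric-growth step. By definition of $\tau(i)$, every index $j$ with $\pi(i)\le j<\tau(i)$ satisfies $\|d_j\|<\dminInd{\pi(i)}$; for those $j$ that also lie in $[\pi(i),\pi(i+1)-2]$, the minimum in \eqref{eq:min-growing-direction-and-lower-bound} cannot equal its second argument, so $\SPTtwo(\SPTtwo\omega_2)^{j-\pi(i)}r_{\pi(i)}\le\|d_j\|<\dminInd{\pi(i)}$. Evaluating this at the largest admissible index (namely $j=\tau(i)-1$ when $\tau(i)\le\pi(i+1)-1$, and $j=\pi(i+1)-2$ in the capped case $\tau(i)=\pi(i+1)$) and solving for the exponent gives $\tau(i)-\pi(i)\le\log_{\SPTtwo\omega_2}\big(\dminInd{\pi(i)}/(\SPTtwo r_{\pi(i)})\big)+2$. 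The same growth argument also settles finiteness: if $\pi(i+1)=\infty$ then \eqref{eq:min-growing-direction-and-lower-bound} holds for all $j\ge\pi(i)$, and since $\SPTtwo\omega_2>1$ and $r_{\pi(i)}>0$ the first argument of the minimum eventually exceeds $\dminInd{\pi(i)}$, forcing $\|d_j\|\ge\dminInd{\pi(i)}$ at some finite $j$ and hence $\tau(i)<\infty$.

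The heart of the proof is the radius lower bound, whose mechanism is a radius floor: the radius cannot collapse on unsuccessful steps. On any unsuccessful step $j$ one has $\hat\rho_j<\beta$; I would first check that this makes the premise of \eqref{eq:directions-increase-decrease-general-part-1} hold—either $\|d_j\|<\SPTtwo r_j$ directly, or else $\|d_j\|\ge\SPTtwo r_j$ and then $r_{j+1}=r_j/\omega_1<\omega_2\SPTtwo r_j\le\omega_2\|d_j\|$, using $\omega_1\SPTtwo\omega_2>1$. Consequently \eqref{eq:directions-increase-decrease-general-part-1} yields $\|d_j\|\ge\omega_1\SPTtwo^{-1}\dminInd{\pi(i)}$ for within-epoch steps (taking $k=\pi(i)$, for which $\varepsilon_{j+1}\ge\varepsilon_{\pi(i)}/2$), hence $r_{j+1}=r_j/\omega_1\ge\SPTtwo^{-1}\dminInd{\pi(i)}\ge\SPTtwo^{-1}\dminInd{j+1}$. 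Since successful steps satisfy $r_{j+1}\ge r_j$ and within an epoch $\varepsilon$ never halves, these two facts preserve a running floor $\min\{r_1,\SPTtwo^{-1}\dminInd{\pi(i)}\}$ inside each epoch. The only place the floor can degrade is at an epoch boundary—the single big-drop step $\pi(i+1)-1$ where $\varepsilon$ falls below half and \eqref{eq:directions-increase-decrease-general-part-1} is unavailable—which costs at most one factor $\omega_1^{-1}$. As there are exactly $i-1$ such boundaries before $\pi(i)$, an induction on $i$ gives $r_{\pi(i)}\ge\omega_1^{-(i-1)}\min\{r_1,\SPTtwo^{-1}\dminInd{\pi(i)}\}\ge\omega_2^{-(i-1)}\min\{r_1,\SPTtwo^{-1}\dminInd{\pi(i)}\}$, using $\omega_1\le\omega_2$.

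Finally I would combine the two estimates. Substituting the floor bound gives $\dminInd{\pi(i)}/(\SPTtwo r_{\pi(i)})\le\omega_2^{i-1}\max\{\dminInd{\pi(i)}/(\SPTtwo r_1),\,1\}$, so taking $\log_{\SPTtwo\omega_2}$, using $1=\log_{\SPTtwo\omega_2}(\SPTtwo\omega_2)$ to rewrite the additive constants as powers of $\omega_2$, and bounding $\max\{a/\SPTtwo,1\}\le\SPTtwo^{-1}\max\{a,1\}$ to clear the factor $\SPTtwo^{-1}$, all the constant and $\SPTtwo$ slack collapses into a bounded number of extra factors of $\omega_2$. Since the target permits the generous factor $\omega_2^{3i}$ while the argument needs only about $\omega_2^{i}$, the inequality $\tau(i)-\pi(i)\le\log_{\SPTtwo\omega_2}\big(\omega_2^{3i}\max\{\dminInd{\pi(i)}/r_1,1\}\big)$ follows, with the $\max\{\cdot,1\}$ handling the two regimes $r_1\le\dminInd{\pi(i)}$ and $r_1>\dminInd{\pi(i)}$. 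The main obstacle is precisely the radius floor: handling the epoch boundaries, where one step may shrink $\varepsilon$ by more than a factor of two and thereby make \eqref{eq:directions-increase-decrease-general-part-1} inapplicable, and controlling the accumulated radius loss across the $i-1$ boundaries—this is exactly what the wide margin in the $\omega_2^{3i}$ factor is designed to absorb.
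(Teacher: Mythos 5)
Your proposal is correct, and it shares the paper's overall skeleton: first use the geometric-growth bound \eqref{eq:min-growing-direction-and-lower-bound} to convert $\tau(i)-\pi(i)$ into $\log_{\SPTtwo\omega_2}\bigl(\dminInd{\pi(i)}/(\SPTtwo r_{\pi(i)})\bigr)+2$, then run an induction over epochs $\ell \le i$ to replace $r_{\pi(i)}$ by a quantity involving only $r_1$ and $\dminInd{\pi(i)}$. Where you genuinely depart from the paper is the mechanism of that induction. The paper's inductive step (its inequality \eqref{eq:lowe-bound-radius-pi}) steps back two iterations from the epoch boundary, using $r_{\pi(\ell+1)}\ge\omega_1^{-2}r_{\pi(\ell+1)-2}\ge\omega_1^{-2}\|d_{\pi(\ell+1)-2}\|$ combined with \eqref{eq:min-growing-direction-and-lower-bound}; this costs $\omega_1^{-3}$ per epoch and invokes only the stated conclusion of Lemma~\ref{lem:radius-and-direction-bounds}.\ref{lem:radius-and-direction-bounds:large-gradient}. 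You instead maintain a running floor inside each epoch: any unsuccessful step automatically satisfies the premise of the internal claim \eqref{eq:directions-increase-decrease-general-part-1} (since $\|d_j\|\ge\SPTtwo r_j$ forces $r_{j+1}=r_j/\omega_1<\omega_2\SPTtwo r_j\le\omega_2\|d_j\|$, as $\omega_1\SPTtwo\omega_2>1$), so the radius can only be cut from a level above $\SPTtwo^{-1}\dminInd{\pi(i)}$, while successful steps never shrink it; the sole loss of $\omega_1^{-1}$ occurs at the boundary step $\pi(\ell+1)-1$, where $\varepsilon$ halves and the claim is unavailable. Your route yields a sharper floor ($\omega_1^{-(i-1)}$ degradation versus the paper's $\omega_1^{-3(i-1)}$), which is immaterial given the $\omega_2^{3i}$ slack, but it relies on citing the intermediate claim \eqref{eq:directions-increase-decrease-general-part-1} from inside the proof of Lemma~\ref{lem:radius-and-direction-bounds} rather than only its statement; you also make explicit the finiteness argument for $\tau(i)$ (unbounded geometric growth when $\pi(i+1)=\infty$), which the paper leaves implicit. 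The final bookkeeping you describe — monotonicity $\dminInd{\pi(i)}\le\dminInd{\pi(\ell)}$, the trivial cases $\tau(i)\le\pi(i)+1$, and absorbing the additive constants and the factor $\SPTtwo^{-1}\le\omega_1\le\omega_2$ into powers of $\omega_2$ — goes through exactly as claimed, since $\omega_2^{i+2}\le\omega_2^{3i}$ for all $i\ge1$.
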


\begin{proof}
	First consider the case that $\tau(i) \le \pi(i) + 1$, then immediately we get \eqref{eq:bound-recovery-phase} because $\omega_2, i \ge 1$ and $\SPTtwo \omega_2 > 1$. On the other hand, if $\pi(i) + 2 \le \tau(i)$ then 
	\[
	\dminInd{\pi(i)} \undergreater{(a)} \|d_{\tau(i) - 2}\| \underge{(b)} \SPTtwo (\SPTtwo \omega_2) ^ {\tau(i) - 2 - \pi(i)} r_{\pi(i)}\underge{(c)} (\SPTtwo \omega_2) ^ {\tau(i) - 2 - \pi(i)} \omega_1 ^ {-1} r_{\pi(i)}
	\]
	where inequality $(a)$ uses the definition of $\tau(i)$, inequality $(b)$ uses \eqref{eq:min-growing-direction-and-lower-bound} with $j=\tau(i)-2$, and inequality $(c)$ uses $\SPTtwo > \omega_1 ^ {-1}$ from the requirements of Algorithm~\ref{alg:CAT}.
	
	Rearranging the latter inequality and then taking the base $\SPTtwo \omega_2$ log of both sides gives 
	\begin{flalign}\label{eq:bound-recovery-phase-claim}
		\tau(i) - \pi(i) \leq 2 + \log_{\SPTtwo \omega_2}\left( \omega_1 \frac{\dminInd{\pi(i)}}{r_{\pi(i)}} \right) \le \log_{\SPTtwo \omega_2}\left( \omega_2^{3} \frac{\dminInd{\pi(i)}}{r_{\pi(i)}} \right) 
	\end{flalign}	
	where the last transition uses that $\omega_2 \ge \omega_1$ and $\SPTtwo \in (0,1)$.
	Next, for all $\ell \le i$ consider the induction hypothesis that
	\begin{flalign}\label{eq:lowe-bound-radius-pi}
		\omega_1^{3(1-\ell)} \min\{r_1, \dminInd{\pi(i)}\} \leq r_{\pi(\ell)}.
	\end{flalign}
	Note that substituting \eqref{eq:lowe-bound-radius-pi} with $\ell = i$ into \eqref{eq:bound-recovery-phase-claim} and using $\omega_2 \ge \omega_1$ gives us \eqref{eq:bound-recovery-phase} as desired, thus it remains to prove that \eqref{eq:lowe-bound-radius-pi} holds.
	The hypothesis holds for $\ell=1$ because using the definition of $\pi$, we have $\pi(1) = 1$ and so $r_{\pi(\ell)}  = r_{\pi(1)} = r_1 \geq \min\{r_1, \dminInd{\pi(i)}\}$. Next, we suppose the hypothesis holds for $\ell = n$. 
	If $\pi(\ell) + 1 \ge \pi(\ell+1)$ then $r_{\pi(\ell+1)} \ge r_{\pi(\ell)} / \omega_1$ so 
	the hypothesis holds for $\ell = n+1$.
	If $\pi(\ell)  + 2 \le \pi(\ell+1)$ then we have  
	\begin{flalign*} 
		r_{\pi(\ell + 1)} &\underge{(a)}\frac{r_{\pi(\ell + 1)-2}}{\omega_1 ^ 2} \ge \frac{\|d_{\pi(\ell+1) - 2}\|}{\omega_1 ^ 2}  \underge{(b)} \omega_1^{-2} \min\{ \SPTtwo (\SPTtwo \omega_2) ^ {\pi(\ell+1) - 2 - \pi(\ell)} r_{\pi(\ell)}, \dminInd{\pi(\ell)}\} \\
		& \underge{(c)} \omega_1^{-2} \min\{ \SPTtwo   r_{\pi(\ell)} , \dminInd{\pi(\ell)} \}  \underge{(d)} \omega_1^{-3} \min\{r_{\pi(\ell)} , \dminInd{\pi(\ell)} \} \underge{(e)} \omega_1^{-3} \min\{r_{\pi(\ell)} , \dminInd{\pi(i)} \} \\ &\underge{(f)} \omega_1^{-3\ell} \min\{r_1, \dminInd{\pi(i)}\} =  \omega_1^{3(1-(\ell+1))} \min\{r_1, \dminInd{\pi(i)}\}
	\end{flalign*}
	where  inequality $(a)$ uses the update rule for $r_{k}$,  inequality $(b)$ uses \eqref{eq:min-growing-direction-and-lower-bound} with $j = \pi(\ell+1) - 2$, $(c)$ uses that $\pi(\ell)  + 2 \le \pi(\ell+1)$ and $\SPTtwo \omega_2 \ge \SPTtwo \omega_1 > 1$ by definition of Algorithm~\ref{alg:CAT}, $(d)$ uses that $\SPTtwo \geq \omega_1 ^ {-1}$ and $\omega_1 > 1$, $(e)$ uses $\dminInd{\pi(i)} \leq \dminInd{\pi(l)}$ because $\varepsilon_{\pi(i)} \leq \varepsilon_{\pi(l)}$ and $(f)$ employs the induction hypothesis. By induction \eqref{eq:lowe-bound-radius-pi} holds as desired.
\end{proof}

\begin{lemma}\label{lem:main-fully-adaptive-result}
	Suppose \edit{\Cref{assume:nice-directions} holds}, then for all $i \in \N$ such that $\tau(i) < \infty$ we have $\pi(i+1) < \infty$, and for all $t \in [\tau(i), \pi(i+1) - 2]$ we have
	\[
	t + 2 - \tau(i) \leq \log_{\omega_1}\left( \frac{\omega_2^3 \max\{ r_1, 2 \dmaxInd{\pi(i)} \}}{ \dminInd{\pi(i)}} \right) +  4 \log_{\omega_1}(\omega_1 \omega_2) \cdot \frac{ f(x_{\pi(i)}) - f(x_{t})}{ \beta \theta \varepsilon_{\pi(i)} \dminInd{\pi(i)} }.
	\]
\end{lemma}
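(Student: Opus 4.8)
The plan is to fix an index $i$ with $\tau(i) < \infty$ and an arbitrary $t \in [\tau(i), \pi(i+1)-2]$, and to bound the $t + 2 - \tau(i)$ iterations lying in $[\tau(i), t+1]$ by counting successful steps ($\hat{\rho}_m \ge \beta$) and unsuccessful steps ($\hat{\rho}_m < \beta$) separately. Writing $p$ and $q$ for the numbers of successful and unsuccessful steps in $[\tau(i), t]$, we have $t + 2 - \tau(i) = p + q + 1$; I will bound $p$ via the guaranteed function decrease and $q$ via the radius dynamics. The two facts I lean on throughout are the phase-two direction bound \eqref{eq:lower-bounded-direction-v2}, giving $\| d_j \| \ge \dminInd{\pi(i)}$ for all $j \in [\tau(i), \pi(i+1)-2]$, and the inequality $\varepsilon_{m+1} \ge \varepsilon_{\pi(i)}/2$, valid whenever $m+1 \le \pi(i+1)-1$ by the definition of $\pi$.

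\textbf{Bounding $p$.} A successful step $m \in [\tau(i), t]$ is accepted, so $x_{m+1} = x_m + d_m$, and the definition of $\hat{\rho}_m$ in \eqref{eq:rho-hat-k} together with $-M_m(d_m) \ge 0$ and $\min\{\| \grad f(x_m) \|, \| \grad f(x_m + d_m) \| \} \ge \varepsilon_{m+1}$ (the latter established exactly as inequality (b) in the proof of part~\ref{lem:radius-and-direction-bounds:part-general-radius-bounds-part1}) yields
\[
f(x_m) - f(x_{m+1}) \ge \tfrac{\beta \theta}{2}\, \varepsilon_{m+1} \| d_m \| \ge \tfrac{\beta \theta}{4}\, \varepsilon_{\pi(i)} \dminInd{\pi(i)},
\]
using $\varepsilon_{m+1} \ge \varepsilon_{\pi(i)}/2$ and $\| d_m \| \ge \dminInd{\pi(i)}$. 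Since $f$ is nonincreasing along the iterates and $f(x_{\tau(i)}) \le f(x_{\pi(i)})$, telescoping over the successful steps in $[\tau(i), t-1]$ bounds their number by $\tfrac{4(f(x_{\pi(i)}) - f(x_t))}{\beta \theta \varepsilon_{\pi(i)} \dminInd{\pi(i)}}$; step $t$ can add at most one more.

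\textbf{Bounding $q$.} Applying part~\ref{lem:radius-and-direction-bounds:part-general-radius-bounds-part2} of Lemma~\ref{lem:radius-and-direction-bounds} with $k = \tau(i)$ and $j = t+1$ gives $r_{t+1} \le r_{\tau(i)}\, \omega_1^{-q} \omega_2^{p}$, whence $q \le \log_{\omega_1}(r_{\tau(i)}/r_{t+1}) + p \log_{\omega_1}\omega_2$. For the denominator I use $r_t \ge \| d_t \| \ge \dminInd{\pi(i)}$ from \eqref{eq:lower-bounded-direction-v2} together with the radius update rule. For the numerator I invoke part~\ref{lem:radius-and-direction-bounds:part-general-radius-bounds-part1} with $k = 1$ (whose hypothesis $f(x_1) - f(x_1) = 0 < \Delta_f$ is automatic) to get $r_{\tau(i)} \le \max\{r_1, \dmaxInd{\tau(i)+1}\}$, and then bound $\dmaxInd{\tau(i)+1} \le 2 \dmaxInd{\pi(i)}$ from $\varepsilon_{\tau(i)+1} \ge \varepsilon_{\pi(i)}/2$ and the definition \eqref{d-max-def}; thus $r_{\tau(i)} \le \max\{r_1, 2\dmaxInd{\pi(i)}\}$.

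\textbf{Combining, and the main obstacle.} Substituting the $q$-bound into $t + 2 - \tau(i) = p + q + 1$ collapses it to $p \log_{\omega_1}(\omega_1 \omega_2)$ plus a radius logarithm, after using $1 + \log_{\omega_1}\omega_2 = \log_{\omega_1}(\omega_1 \omega_2)$. The delicate point I expect to be the main obstacle is keeping the radius logarithm at the factor $\omega_2^3$ demanded by the statement rather than a looser $\omega_2^4$; this is achieved by a complementary case split on the final step $t$. If $t$ is successful, then $p$ inherits the extra ``$+1$'' from the $p$-bound, but the radius does not contract, so $r_{t+1} \ge r_t \ge \dminInd{\pi(i)}$ saves a factor $\omega_1$; if $t$ is unsuccessful, then $r_{t+1} \ge r_t/\omega_1$ loses that factor but $p$ carries no extra term. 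In both cases, using $1 = \log_{\omega_1}\omega_1$ and $\omega_1 \le \omega_2$, the estimate reduces to exactly $\log_{\omega_1}\!\big(\omega_2^3 \max\{r_1, 2\dmaxInd{\pi(i)}\}/\dminInd{\pi(i)}\big) + 4\log_{\omega_1}(\omega_1\omega_2)\cdot\frac{f(x_{\pi(i)}) - f(x_t)}{\beta\theta \varepsilon_{\pi(i)}\dminInd{\pi(i)}}$. Finally, $\pi(i+1) < \infty$ follows by contradiction: were it infinite, the above bound would hold for every finite $t \ge \tau(i)$, yet its right-hand side stays bounded because $f(x_{\pi(i)}) - f(x_t) \le \Delta_f$ by \eqref{eq:bounded-f-sequence}, while $t + 2 - \tau(i) \to \infty$.
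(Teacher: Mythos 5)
Your proof is correct and follows essentially the same route as the paper's: both decompose the iterations in $[\tau(i), t]$ into successful and unsuccessful steps, bound the successful ones by telescoping the guaranteed decrease $\frac{\beta\theta}{4}\varepsilon_{\pi(i)}\dminInd{\pi(i)}$, bound the unsuccessful ones via Lemma~\ref{lem:radius-and-direction-bounds}.\ref{lem:radius-and-direction-bounds:part-general-radius-bounds-part2} together with the lower bound $r \ge \dminInd{\pi(i)}$ from \eqref{eq:lower-bounded-direction-v2} and the upper bound $r_{\tau(i)} \le \max\{r_1, 2\dmaxInd{\pi(i)}\}$ from part~\ref{lem:radius-and-direction-bounds:part-general-radius-bounds-part1}, and conclude finiteness of $\pi(i+1)$ from boundedness of the right-hand side. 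The only differences are bookkeeping (you include step $t$ in the counts and split cases on whether it is successful, while the paper excludes it and absorbs a constant $2$; you also apply part~\ref{lem:radius-and-direction-bounds:part-general-radius-bounds-part1} at $j=\tau(i)$ rather than at $j=\tau(i)-1$ preceded by $r_{\tau(i)} \le \omega_2 r_{\tau(i)-1}$), yielding the same $\omega_2^3$ constant.
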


\begin{proof}
	Note that if $t \le \tau(i) + 1$ the result trivially holds because $\omega_2 > \omega_1 > 1$. Thus throughout the proof we will assume $t \ge \tau(i) + 2$.
	Lemma~\ref{lem:radius-and-direction-bounds}.\ref{lem:radius-and-direction-bounds:part-general-radius-bounds-part2} with $k = \tau(i)$ implies that $r_t \leq r_{\tau(i)} \omega_1 ^ {-n_t} \omega_2 ^ {p_t}$ where $p_t = \abs{ \{ m \in [\tau(i), t) : \hat{\rho}_m \geq \beta \}}$ and $n_t = \abs{\{m \in [\tau(i), t) : \hat{\rho}_m < \beta \}}$.
	Take the base $\omega_1$ log of both sides of this inequality and rearranging gives
	\[
	n_t = n_t \log_{\omega_1}(\omega_1) \le \log_{\omega_1}(r_{\tau(i)} / r_t) + p_t \log_{\omega_1}(\omega_2),
	\]
	applying \eqref{eq:lower-bounded-direction-v2} with $j = t$ to this inequality yields
	\begin{flalign}\label{eq:upper-bound-n-j} 
		n_t \le \log_{\omega_1}(r_{\tau(i)} / \dminInd{\pi(i)}) + p_t \log_{\omega_1}(\omega_2). 
	\end{flalign}
	Furthermore, if $\tau(i) = 1$ then $r_{\tau(i)} \le r_1$;
	on the other hand, if $\tau(i) > 1$ we have 
	\begin{flalign}\label{eq:upper-bound-r-tau-i}
		r_{\tau(i)} \underle{(a)} \omega_2r_{\tau(i) - 1} \underle{(b)} \omega_2\max\{r_1, \dmaxInd{\tau(i)} \} \underle{(c)} \omega_2\max\{ r_1, 2 \dmaxInd{\pi(i)} \}
	\end{flalign}
	where inequality $(a)$ uses the radius update rule of Algorithm~\ref{alg:CAT}, inequality (b) uses Lemma~\ref{lem:radius-and-direction-bounds}.\ref{lem:radius-and-direction-bounds:part-general-radius-bounds-part1} with  $k=1$ and $j=\tau(i)-1$, and inequality (c) uses the definition of $\dmaxInd{}$ and that $\varepsilon_{\tau(i)} > \varepsilon_{\pi(i)} / 2$ because $\tau(i) \le t \le \pi(i+1) - 2$.
	Substituting \eqref{eq:upper-bound-n-j} and \eqref{eq:upper-bound-r-tau-i} into $t + 2 - \tau(i) \le 2 + n_t + p_t$ (which holds by definition of $n_t$ and $p_t$) yields
	\begin{flalign}
		t + 2 - \tau(i) &\le 2 + \log_{\omega_1}\left(  \frac{\omega_2\max\{ r_1, 2 \dmaxInd{\pi(i)} \} }{ \dminInd{\pi(i)} } \right) + p_t \left( 1 + \log_{\omega_1}(\omega_2) \right) \notag \\
		&= \log_{\omega_1}\left(  \frac{\omega_1^2 \omega_2\max\{ r_1, 2 \dmaxInd{\pi(i)} \} }{ \dminInd{\pi(i)} } \right) + p_t  \log_{\omega_1}(\omega_1 \omega_2) .  \label{eq:j-minus-tau}
	\end{flalign}
	We proceed to bound $p_t$ since by \eqref{eq:j-minus-tau} it will allow us to bound $t + 2 - \tau(i)$.
	
	Suppose that the indices of $\{ m \in [\tau(i), t): \hat{\rho}_m \geq \beta \}$ are ordered increasing value by a permutation function $\kappa$, i.e., $\kappa(1) <\kappa(2) < \cdots < \kappa(p_t)$ with $\{ m \in [\tau(i), t): \hat{\rho}_m \geq \beta \} = \{ \kappa(i) : i \in [1, p_t] \cap \N \}$.
	Therefore, since the function values of the  iterates are nonincreasing we get
	\begin{flalign*}
		&f(x_{\tau(i)}) - f(x_{t}) \ge f(x_{\kappa(1)}) - f(x_{\kappa(p_{t})+1}) \ge \sum_{m=1}^{p_{t}} f(x_{\kappa(m)}) - f(x_{\kappa(m)+1}) \notag \\
		&\undereq{(a)} \sum_{m=1}^{p_t} \hat{\rho}_{\kappa(m)} \left( -\Mk({d_{\kappa(m)}}) + \frac{\theta}{2} \min \{ \| \grad f(x_{\kappa(m)})\|, \| \grad f(x_{\kappa(m)} +d_{\kappa(m)}) \|\} \| d_{\kappa(m)} \| \right) \\
		&\underge{(b)} \sum_{m=1}^{p_t} \beta \left( -\Mk(d_{\kappa(m)}) + \frac{\theta}{2} \min \{ \| \grad f(x_{\kappa(m)})\|, \| \grad f(x_{\kappa(m)} +d_{\kappa(m)}) \|\} \| d_{\kappa(m)} \| \right) \\
		&\underge{(c)} \frac{\beta \theta}{2}  \sum_{m=1}^{p_t} \min \{ \| \grad f(x_{\kappa(m)})\|, \| \grad f(x_{\kappa(m)} +d_{\kappa(m)}) \|\} \dminInd{\pi(i)} \\
		&\underge{(d)} \frac{\varepsilon_{\pi(i)} \beta \theta}{4}  p_t \dminInd{\pi(i)}
	\end{flalign*}
	where equality $(a)$ uses the definition of $\hat{\rho}_{\kappa(m)}$, inequality $(b)$ follows from $\hat{\rho}_{\kappa(m)} \ge \beta$, inequality $(c)$ uses $-\Mk(d_{\kappa(m)}) \ge 0$ and \eqref{eq:lower-bounded-direction-v2}, and inequality $(d)$ uses \edit{ the fact that } $\min \{ \| \grad f(x_{\kappa(m)})\|, \| \grad f(x_{\kappa(m)} +d_{\kappa(m)}) \|\} \ge \frac{\varepsilon_{\pi(i)}}{2}$ by $\kappa(m) < t \le \pi(i+1) - 2$.
	Rearranging the latter inequality for $p_t$ using the fact that $\beta \theta \varepsilon_{\pi(i)} \dminInd{\pi(i)} > 0$ yields 
	\[
	p_{t} \le \frac{4 (f(x_{\tau(i)}) - f(x_{t}))}{ \beta \theta \varepsilon_{\pi(i)} \dminInd{\pi(i)} }.
	\]
	Using this inequality and \eqref{eq:j-minus-tau} we get
	\begin{flalign}\label{eq:j-minus-tau-i-bound}
		t + 2 - \tau(i) &\le \log_{\omega_1}\left(  \frac{\omega_1^2 \omega_2\max\{ r_1, 2 \dmaxInd{\pi(i)} \} }{ \dminInd{\pi(i)} } \right) + \frac{4 (f(x_{\tau(i)}) - f(x_{t}))}{ \beta \theta \varepsilon_{\pi(i)} \dminInd{\pi(i)} }  \log_{\omega_1}(\omega_1 \omega_2).
	\end{flalign}
	Since $f(x_{t})$ is bounded below we deduce that $t$ is bounded above and thus $\pi(i) < \infty$. 
	We get the result by using $\omega_1 \leq \omega_2$ and observing that $f(x_{\tau(i)}) \le f(x_{\pi(i)})$ as $\pi(i) \le \tau(i)$.
\end{proof}

\newcommand{\FinalConst}{\hat{C}} 

We now provide our convergence \edit{guarantee} for Algorithm~\ref{alg:CAT}.

\begin{theorem}\label{thm:main-fully-adaptive-result}
	Suppose \edit{\Cref{assume:nice-directions} holds}, then for all $\epsilon \in (0,\infty)$ there exists some iteration $k$ with $\varepsilon_k \le \epsilon$ and
	\begin{flalign*}
		k 
		&\le \FinalConst \cdot \frac{\Delta_f L ^ {1/2}}{\epsilon ^ {3/2}} + \log_{2}\left(\frac{2 \varepsilon_1}{\epsilon} \right) \log_{\omega_1}\left( \frac{\omega_1 \omega_2^3 \TheoryAlgConst^{1/2} L^{1/2 }}{\SPTtwo} \cdot \max\left\{ \frac{r_1}{\epsilon ^ {1/2}}, \frac{4 \omega_2}{\beta \theta} \cdot \frac{\Delta_f}{\epsilon ^ {3/2}} \right\} \right) \notag \\ &+ \log_{2}\left(\frac{2 \varepsilon_1}{\epsilon} \right) \left(3 \log_{2}\left(\frac{2 \varepsilon_1}{\epsilon} \right)  \log_{\SPTtwo \omega_2} (\omega_2) +  \log_{\SPTtwo \omega_2} \left(\max\left\{\frac{2\omega_2}{\beta \theta} \cdot \frac{\Delta_f}{ r_1 \epsilon}, 1\right\} \right) \right) + 1
	\end{flalign*}
	where $\FinalConst := 4 \log_{\omega_1}(\omega_1\omega_2)\frac{\TheoryAlgConst ^ {1/2} \omega_1}{\beta \theta \SPTtwo}$ and $\TheoryAlgConst$ are problem-independent constants, recalling from \eqref{eq:constant-algorithm-definition} that 
	\begin{flalign*}
		\TheoryAlgConst = 
		\max\left\{\frac{1}{3\AlgConst}, \frac{2 + 3 \SPTthree (1-\beta)}{3\left(\SPTthree (1 - 2 \SPTone) (1-\beta) - \beta \theta \right)} \right\}.
	\end{flalign*}	
\end{theorem}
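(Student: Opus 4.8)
The plan is to combine the two per-phase bounds from Lemma~\ref{lem:increase-radius-index-small-distance} and Lemma~\ref{lem:main-fully-adaptive-result} through the telescoping decomposition \eqref{eq:decompose-piN+1}. First I would establish that every $\pi(i)$ and $\tau(i)$ is finite: since $\pi(1)=1<\infty$, Lemma~\ref{lem:increase-radius-index-small-distance} yields $\tau(1)<\infty$, Lemma~\ref{lem:main-fully-adaptive-result} then yields $\pi(2)<\infty$, and alternating the two lemmas gives finiteness of all indices by induction. Because $\varepsilon_{\pi(i)} \le 2^{1-i}\varepsilon_1$, the surrogate is driven below any $\epsilon>0$, so I would let $N$ be the smallest integer with $\varepsilon_{\pi(N+1)} \le \epsilon$; minimality gives $\varepsilon_{\pi(N)}>\epsilon$, hence $\varepsilon_{\pi(i)} \ge \varepsilon_{\pi(N)} > \epsilon$ for every $i\le N$ (as $\{\varepsilon_k\}$ is nonincreasing), and the chain $\epsilon < \varepsilon_{\pi(N)} \le 2^{1-N}\varepsilon_1$ forces $N \le \log_2(2\varepsilon_1/\epsilon)$. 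The target iteration is $k=\pi(N+1)$, and by \eqref{eq:decompose-piN+1} it equals $1 + \sum_{i=1}^N \big[(\pi(i+1)-\tau(i)) + (\tau(i)-\pi(i))\big]$; I treat the two sums separately.

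For the phase-two sum I would apply Lemma~\ref{lem:main-fully-adaptive-result} at $t=\pi(i+1)-2$, which splits $\pi(i+1)-\tau(i)$ into a fixed logarithmic part and a function-value part $4\log_{\omega_1}(\omega_1\omega_2)\frac{f(x_{\pi(i)})-f(x_{\pi(i+1)-2})}{\beta\theta\varepsilon_{\pi(i)}\dminInd{\pi(i)}}$. Substituting $\dminInd{\pi(i)}=\SPTtwo\omega_1^{-1}\TheoryAlgConst^{-1/2}L^{-1/2}\varepsilon_{\pi(i)}^{1/2}$ makes the denominator a multiple of $\varepsilon_{\pi(i)}^{3/2}$, and $\varepsilon_{\pi(i)}\ge\epsilon$ replaces it by $\epsilon^{3/2}$, producing the factor $\FinalConst L^{1/2}\epsilon^{-3/2}$. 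The crux is the telescoping: since the iterate values are nonincreasing, $f(x_{\pi(i+1)-2}) \ge f(x_{\pi(i+1)})$, so
\[
\sum_{i=1}^N \left( f(x_{\pi(i)}) - f(x_{\pi(i+1)-2}) \right) \le \sum_{i=1}^N \left( f(x_{\pi(i)}) - f(x_{\pi(i+1)}) \right) = f(x_1) - f(x_{\pi(N+1)}) \le \Delta_f,
\]
where the last step uses \eqref{eq:bounded-f-sequence}. This collapses the entire phase-two function-value contribution into the single optimal term $\FinalConst\,\Delta_f L^{1/2}\epsilon^{-3/2}$. The fixed logarithmic part of Lemma~\ref{lem:main-fully-adaptive-result}, bounded using $\dmaxInd{\pi(i)} \le \tfrac{2\omega_2}{\beta\theta}\Delta_f\epsilon^{-1}$ and $\dminInd{\pi(i)} \ge \SPTtwo\omega_1^{-1}\TheoryAlgConst^{-1/2}L^{-1/2}\epsilon^{1/2}$ (both from $\varepsilon_{\pi(i)}\ge\epsilon$ and monotonicity of the two sequences), contributes at most $N \le \log_2(2\varepsilon_1/\epsilon)$ copies of the $\log_{\omega_1}(\cdots)$ term in the statement.

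For the phase-one sum I would insert Lemma~\ref{lem:increase-radius-index-small-distance}, splitting $\log_{\SPTtwo\omega_2}\!\big(\omega_2^{3i}\max\{\dminInd{\pi(i)}/r_1,1\}\big)$ into $3i\log_{\SPTtwo\omega_2}(\omega_2)$ and $\log_{\SPTtwo\omega_2}(\max\{\dminInd{\pi(i)}/r_1,1\})$. Summing the first piece gives $3\tfrac{N(N+1)}{2}\log_{\SPTtwo\omega_2}(\omega_2) \le 3\log_2^2(2\varepsilon_1/\epsilon)\log_{\SPTtwo\omega_2}(\omega_2)$, while each term of the second is bounded via $\dminInd{\pi(i)} \le \dmaxInd{\pi(i)} \le \tfrac{2\omega_2}{\beta\theta}\Delta_f\epsilon^{-1}$, yielding at most $N$ copies of $\log_{\SPTtwo\omega_2}(\max\{\tfrac{2\omega_2\Delta_f}{\beta\theta r_1\epsilon},1\})$. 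Adding the $\pi(1)=1$ term and collecting the three groups reproduces the stated bound.

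I expect the main obstacle to be the phase-two telescoping together with the bookkeeping that turns the per-phase bounds into closed form. The delicate points are (i) justifying that the differences $f(x_{\pi(i)})-f(x_{\pi(i+1)-2})$ genuinely telescope to at most $\Delta_f$ rather than accumulating across the $N$ phases — this hinges on monotonicity of $\{f(x_k)\}$ and the nesting $\pi(i)\le\tau(i)\le\pi(i+1)$ — and (ii) uniformly replacing every $\varepsilon_{\pi(i)}$ by $\epsilon$ using $\varepsilon_{\pi(i)}>\epsilon$ for $i\le N$, which is what converts the $\varepsilon_{\pi(i)}^{-3/2}$ scaling into the advertised $\epsilon^{-3/2}$ and isolates the optimal $O(\Delta_f L^{1/2}\epsilon^{-3/2})$ term from the $\tilde O(1)$ remainder. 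Bounding $\dminInd{\pi(i)}$ by $\dmaxInd{\pi(i)}$ in the phase-one term also requires care, since it is not unconditional and must be argued in the regime where the $\max$ is nontrivial.
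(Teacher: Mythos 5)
Your proposal matches the paper's proof essentially step for step: the same induction alternating Lemma~\ref{lem:increase-radius-index-small-distance} and Lemma~\ref{lem:main-fully-adaptive-result} for finiteness of $\pi(i)$ and $\tau(i)$, the same choice of $N$ with $N \le \log_2(2\varepsilon_1/\epsilon)$, the same decomposition \eqref{eq:decompose-piN+1}, the same application of Lemma~\ref{lem:main-fully-adaptive-result} at $t=\pi(i+1)-2$ with telescoping of $\sum_i \big(f(x_{\pi(i)})-f(x_{\pi(i+1)})\big) \le \Delta_f$, and the same final substitutions $\varepsilon_{\pi(N)} \ge \epsilon$, $\dminInd{\pi(N)} \ge \SPTtwo\omega_1^{-1}\TheoryAlgConst^{-1/2}L^{-1/2}\epsilon^{1/2}$, and $\dmaxInd{\pi(N)} \le \frac{2\omega_2}{\beta\theta}\cdot\frac{\Delta_f}{\epsilon}$ (your per-term bounding versus the paper's pulling out the index-$\pi(N)$ denominator before telescoping is an immaterial reordering). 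Even the step you flag as delicate, $\dminInd{\pi(i)} \le \dmaxInd{\pi(i)}$ in the phase-one sum, is invoked in exactly the same way by the paper (asserted directly from the definitions in \eqref{eq:tau-i-pi-i-bound}), so your attempt is faithful to the published argument.
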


\begin{proof}
	First we show that $\pi(i)$ and $\tau(i)$ are finite for all $i \in \N$. Using the results of Lemma~\ref{lem:increase-radius-index-small-distance}, we know if $\pi(i)$ is finite then $\tau(i)$ is finite and hence using the results of Lemma~\ref{lem:main-fully-adaptive-result}, we get $\pi(i+1)$ is finite since $\tau(i)$ is finite. 
	Also, $\pi(1)$ is finite by definition.
	Therefore by induction we deduce that  $\pi(i)$ and $\tau(i)$ are finite.
	
	By Lemma~\ref{lem:increase-radius-index-small-distance} we have
	\begin{flalign}
		\sum_{i = 1} ^ {N} \tau(i) - \pi(i)  &\le \sum_{i = 1} ^ {N} \log_{\SPTtwo \omega_2} \left(\omega_2^{3i} \max\left\{\frac{\dminInd{\pi(i)}}{r_1}, 1 \right\}\right)  \notag  \\
		&\le N \log_{\SPTtwo \omega_2} \left(\omega_2^{3 N} \max\left\{\frac{\dmaxInd{\pi(N)}}{r_1}, 1 \right\}\right) 	\label{eq:tau-i-pi-i-bound}
	\end{flalign}
	where the second inequality uses $\dminInd{\pi(i)} \leq \dmaxInd{\pi(i)}$ based on the definitions of $\dminInd{}$ and $\dmaxInd{}$, and $\dmaxInd{\pi(i)} \leq \dmaxInd{\pi(N)}$ due to $\varepsilon_{\pi(N)} \le \varepsilon_{\pi(i)}$.
	
	Lemma~\ref{lem:main-fully-adaptive-result} with $t = \pi(i + 1) - 2$ implies that
	\begin{flalign}
		&\sum_{i=1}^{N} \pi(i + 1) - \tau(i) \notag \\
		& \underle{(a)} \sum_{i=1}^{N} \log_{\omega_1}\left( \frac{\omega_2^3 \max\{ r_1, 2 \dmaxInd{\pi(i)} \}}{ \dminInd{\pi(i)}} \right) +  4 \log_{\omega_1}(\omega_1 \omega_2) \cdot \frac{ f(x_{\pi(i)}) - f(x_{\pi(i + 1)})}{ \beta \theta \varepsilon_{\pi(i)} \dminInd{\pi(i)} } \notag \\
		&\underle{(b)} N \log_{\omega_1}\left( \frac{\omega_2^3 \max\{ r_1, 2\dmaxInd{\pi(N)} \}}{\dminInd{\pi(N)}} \right) + \frac{4 \log_{\omega_1}(\omega_1 \omega_2)}{\beta \theta \varepsilon_{\pi(N)} \dminInd{\pi(N)}} \sum_{i=1}^{N} f(x_{\pi(i)}) - f(x_{\pi(i+1)}) \notag \\
		&\underle{(c)} N \log_{\omega_1}\left( \frac{\omega_2^3 \max\{ r_1, 2\dmaxInd{\pi(N)} \}}{\dminInd{\pi(N)}} \right) +  \frac{4 \log_{\omega_1}(\omega_1 \omega_2)}{\beta \theta \varepsilon_{\pi(N)} \dminInd{\pi(N)}} (f(x_{1}) - f(x_{\pi(N+1)})) 
		\label{eq:sum:lem:main-fully-adaptive-result}
	\end{flalign}
	where inequality $(a)$ uses $f(x_{\pi(i+1)-2}) \ge f(x_{\pi(i + 1)})$, inequality $(b)$ uses that $\varepsilon_{\pi(N)} \le \varepsilon_{\pi(i)}$ 
	and thus
	$\dminInd{\pi(N)} \le \dminInd{\pi(i)}$ and $\dmaxInd{\pi(i)} \le \dmaxInd{\pi(N)}$, and $(c)$ is by telescoping. 
	
	Combining \eqref{eq:tau-i-pi-i-bound} and \eqref{eq:sum:lem:main-fully-adaptive-result} and using \eqref{eq:decompose-piN+1} we get 
	\begin{flalign}\label{eq:J-eps}
		\pi(N+1) &\le  \frac{4 \log_{\omega_1}(\omega_1 \omega_2)}{\beta \theta \varepsilon_{\pi(N)} \dminInd{\pi(N)}} (f(x_{1}) - f(x_{\pi(N+1)}))  + N \log_{\omega_1}\left( \frac{\omega_2^3 \max\{ r_1, 2\dmaxInd{\pi(N)} \}}{\dminInd{\pi(N)}} \right) \notag \\ &+ N \log_{\SPTtwo \omega_2} \left(\omega_2^{3 N} \max\left\{\frac{\dmaxInd{\pi(N)}}{r_1}, 1 \right\}\right) + 1
	\end{flalign}
	Next, by definition of $\pi(N)$ we have $\epsilon \le \varepsilon_{\pi(N)} \le \varepsilon_{\pi(N-1)} / 2 \le \varepsilon_{\pi(1)} 2^{1-N} = \varepsilon_{1} 2^{1-N}$. Rearranging for $N$ gives 
	\begin{flalign}\label{eq:upper-bound-N}
		N \le 1 + \log_2(\varepsilon_1 / \epsilon ) = \log_2(2 \varepsilon_1 / \epsilon ).
	\end{flalign}
	Since $\epsilon \le \varepsilon_{\pi(N)}$ we can substitute into \eqref{eq:J-eps} that 
	$\dmaxInd{\pi(N)} = \frac{2 \omega_2}{\beta \theta} \cdot \frac{\Delta_f}{\varepsilon_{\pi(N)}} \le \frac{2 \omega_2}{\beta \theta} \cdot \frac{\Delta_f}{\epsilon}$ and \linebreak
	$\dminInd{\pi(N)} = \SPTtwo \omega_1 ^ {-1}\TheoryAlgConst^{-1/2} L^{-1/2} \varepsilon_{\pi(N)}^{1/2} \ge \SPTtwo \omega_1 ^ {-1}\TheoryAlgConst^{-1/2} L^{-1/2} \epsilon^{1/2} $
	which combined with \eqref{eq:upper-bound-N} gives 
	Theorem~\ref{thm:main-fully-adaptive-result}. 

\end{proof}

\possibleImprovement{
	\section{Convergence on functions with locally Lipschitz second derivatives} \label{sec:locally-lipschitz-second-derivatives}
	
	This section establishes the convergence properties of our algorithm for functions with locally Lipschitz second derivatives. We show that our algorithm either confirms the objective function is unbounded below or converges to a stationary point, even when the sequence of iterates is unbounded.
	
	Existing convergence results for optimization methods in the literature assume that the derivatives are Lipschitz on the initial level set (see \cite{nesterov2006cubic, nocedal2006numerical, armijo1966minimization, powell2010convergence})
	, $\{ x \in \R^{n} : f(x) \le f(x_1) \}$ or are vacuous if the iterates diverge (\cite[Proposition~1.2.1]{bertsekas1997nonlinear}). Both of these conditions are difficult to establish. On the other hand, if a function is thrice differentiable (which is easy to verify), then its second derivatives are locally Lipschitz.
	Appendix~\ref{app:convergence-gd-armijo-rule} shows that gradient descent with the Armijo rule obtains similar convergence properties. 
	Key to our result is that the function reduction is reduces by at least $\theta \sigma \varepsilon_k \| d_k \| / 2$ on accepted steps due to the requirement $\hat{\rho}_k \ge \AcceptableConst$ for the step to be accepted in \Cref{alg:CAT}.
	
	\begin{theorem}
		Suppose that $\grad^2 f$ is locally Lipschitz continuous, and $\AcceptableConst > 0$ then, $\lim_{k \rightarrow \infty} \varepsilon_k = 0$ or $\lim_{k \rightarrow \infty} f(x_k) = -\infty$.
	\end{theorem}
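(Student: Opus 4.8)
The plan is to prove the dichotomy by contradiction, reducing the locally Lipschitz case to the global estimate already established in Theorem~\ref{thm:main-fully-adaptive-result}. First I would observe that $\{f(x_k)\}$ is nonincreasing: on rejected steps $x_{k+1}=x_k$, while on accepted steps the acceptance rule forces $f(x_k+d_k)\le f(x_k)$. Hence $\lim_k f(x_k)$ exists in $[-\infty,f(x_1)]$; if it equals $-\infty$ the conclusion holds, so I may assume it is finite and set $\Delta := f(x_1)-\lim_k f(x_k)<\infty$. Since $\{\varepsilon_k\}$ is nonincreasing and nonnegative, $\varepsilon_\infty := \lim_k \varepsilon_k$ exists; I will suppose $\varepsilon_\infty>0$ and derive a contradiction, which forces $\lim_k \varepsilon_k = 0$.

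The central estimate is a uniform per-step decrease on accepted steps. A short induction on \eqref{def:vareps} shows $\varepsilon_k \le \|\grad f(x_k)\|$ for all $k$, and on an accepted step the first branch of \eqref{def:vareps} triggers (because $\GainThres_k>0$ gives $f(x_k+d_k)\le f(x_k) < f(x_k)+\GainThres_k$), so $\varepsilon_{k+1}\le \|\grad f(x_k+d_k)\|$; therefore $\min\{\|\grad f(x_k)\|,\|\grad f(x_k+d_k)\|\}\ge \varepsilon_{k+1}\ge \varepsilon_\infty$ on accepted steps. Combining this with $-M_k(d_k)\ge 0$ (from \eqref{eq:model-reduction-by-dist1}) and the acceptance requirement $\hat{\rho}_k\ge \AcceptableConst$ in the definition \eqref{eq:rho-hat-k} yields
\[
f(x_k)-f(x_{k+1}) \ge \AcceptableConst\,\frac{\theta}{2}\,\varepsilon_\infty\,\|d_k\|
\]
on every accepted step. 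Summing and telescoping gives $\sum_{k\text{ accepted}}\|d_k\| \le \frac{2\Delta}{\AcceptableConst\,\theta\,\varepsilon_\infty}<\infty$. Since rejected steps leave the iterate unchanged, $x_k-x_1$ is a partial sum of the accepted $d_j$, so the iterates have finite path length and remain in a bounded set.

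Next I would bound the radii to control the candidate points. The update gives $r_{k+1}\le \max\{r_k,\omega_2\|d_k\|\}$, and every successful step ($\hat{\rho}_k\ge\beta$) is accepted since $\beta\ge\AcceptableConst$; hence $\sup_{k\text{ accepted}}\|d_k\|<\infty$ forces $r_k\le R:=\max\{r_1,\omega_2\sup_{k\text{ accepted}}\|d_k\|\}<\infty$. Because the subproblem criteria impose $\|d_k\|\le r_k$, \emph{every} direction (accepted or not) satisfies $\|d_k\|\le R$, so every $x_k$ and every candidate $x_k+d_k$ lies in a fixed closed ball $K$ centered at $x_1$. As $K$ is compact and convex, local Lipschitz continuity of $\grad^2 f$ upgrades to a single constant $L_K$ on $K$, and the standard second-order estimates \citep[Lemma 1]{nesterov2006cubic}, applied on the segments $[x_k,x_k+d_k]\subseteq K$, give exactly \eqref{eq:Lip-grad1} and \eqref{eq:Lip-progress1} with $L=L_K$; together with \eqref{eq:bounded-f-sequence} (valid for $\Delta_f=\Delta$) this verifies \Cref{assume:nice-directions}. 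Applying Theorem~\ref{thm:main-fully-adaptive-result} with $\epsilon=\varepsilon_\infty/2$ then produces an index $k$ with $\varepsilon_k\le \varepsilon_\infty/2 < \varepsilon_\infty \le \varepsilon_k$, a contradiction.

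The main obstacle is precisely that divergence of the iterates must be ruled out before local Lipschitzness becomes usable, since it only yields estimates on compact sets. Everything hinges on converting the assumption $\varepsilon_\infty>0$ into boundedness of both the iterates and the candidate points $x_k+d_k$: the finite-path-length argument handles the iterates, but the candidates require the separate observation that the radii stay bounded, so that even rejected directions are small enough to keep $x_k+d_k$ inside $K$. I expect the two pieces of bookkeeping — verifying $\varepsilon_k\le \|\grad f(x_k)\|$ and the uniform radius bound $r_k\le R$ — to be the only genuinely delicate points, after which the reduction to Theorem~\ref{thm:main-fully-adaptive-result} is immediate.
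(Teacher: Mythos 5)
Your proof is correct and uses essentially the same ingredients as the paper's: the per-step decrease bound $f(x_k)-f(x_{k+1}) \ge \frac{\sigma\theta}{2}\,\varepsilon\,\|d_k\|$ extracted from the acceptance condition $\hat{\rho}_k \ge \sigma$ (with $-M_k(d_k)\ge 0$ and the monotonicity of $\varepsilon_k$), the radius bound obtained because successful steps are accepted, and the compactness/local-Lipschitz upgrade that verifies \Cref{assume:nice-directions} and hands the problem to Theorem~\ref{thm:main-fully-adaptive-result}. The only difference is organizational: the paper splits into cases on whether the iterates are bounded (using the telescoped decrease directly to drive $\varepsilon_K \to 0$ in the unbounded case), whereas you run a single contradiction from $\varepsilon_\infty > 0$, using the same telescoped bound to force finite path length and hence boundedness.
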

	
	\begin{proof}
		\edit{
			Assume that $f(x_1) - f(x_k) \le \Delta_f < \infty$ for all $k \in \N$ since otherwise we have $\lim_{k \rightarrow \infty} f(x_k) = -\infty$.
			Define the set of accepted steps $\mathcal{A}_K$ to be the set of indices corresponding to accepted steps $k \le K$, where an accepted step $k$ satisfies $f(x_k + d_k) \le f(x_k)$ and $\hat{\rho}_k \geq \AcceptableConst$ (see definition of accepted steps in \Cref{alg:CAT}). We now consider two cases: when the iterates are unbounded and when they are bounded.
			
			\paragraph{Case One: Unbounded Iterates}
			Consider the case where  the sequence of iterates is unbounded, specifically $\lim_{K \rightarrow \infty} \max_{i \in [K]} \| x_i \| = \infty$.
			Then,
			\begin{flalign} \label{eq:total-decrease-in-objective-function}
				\notag
				\Delta_f &\ge \sum_{k \in \mathcal{A}_K} f(x_{k}) - f(x_{k+1})
				\underge{(a)} \frac{\theta \AcceptableConst \sum_{k \in \mathcal{A}_K} \min\{\|\grad f(x_{k})\|, \|\grad f(x_{k+1} ) \| \} \| d_{k} \|  }{2} \\
				&\underge{(b)} \frac{\theta \AcceptableConst \varepsilon_K \sum_{k \in \mathcal{A}_K} \| d_{k} \|}{2} =_{(c)} \frac{\theta \AcceptableConst \varepsilon_K \sum_{k=1}^K \| x_{k+1} - x_k \|}{2} \ge \frac{\theta \AcceptableConst \varepsilon_K \| x_{K+1} - x_{1} \|}{2} 
			\end{flalign}  
			where $(a)$ uses the definition of the actual-to-predicted reduction ratio $\hat{\rho}_k$, the condition $\hat{\rho}_k \ge \AcceptableConst$, the property that $-\Mk(d_{k}) \geq 0$, and the definition of $\varepsilon_k$, $(b)$ uses the nonincreasing nature of the sequence $\varepsilon_k$, i.e., $\varepsilon_K \leq \varepsilon_{k}$ for all $k \in \mathcal{A}_K$, and $(c)$ uses that rejected steps satisfy $x_{k+1} = x_k$.
			Rearranging \eqref{eq:total-decrease-in-objective-function} so that only $\varepsilon_K$ is on the RHS and using that $\lim_{K \rightarrow \infty} \max_{i \in [K]} \| x_i \| = \infty$ gives 
			$\lim_{K \rightarrow \infty} \varepsilon_K = 0$
			as desired.
			
			\paragraph{Case Two: Bounded Iterates}
			
			First note that:
			\begin{flalign*}
				r_{K+1} \underle{(a)} \max_{k \in \mathcal{A}_K} \{ \omega_2 \| d_k \|, r_1 \} \underle{(b)} \max_{k \in \mathcal{A}_K} \{ \omega_2 (\| x_k \| + \| x_{k+1} \|) , r_1 \}  
			\end{flalign*}
			where inequality $(a)$ follows by induction on $r_{k+1} \leq \max\{ \omega_2 \| d_k \|,  r_k \}$ for all $k \in  \mathcal{A}_K$ and  $r_{k+1} \le r_k$ for $k \not\in \mathcal{A}_K$  (these inequalities follow from the radius update rule in \Cref{alg:CAT}) and $(b)$ then follows from the triangle inequality.
			We conclude that since the iterates are bounded the trust-region radius, $r_k$, is also bounded.
			
			Thus, there exists some $R' > 0$ such that $x_k + d_k \in \mathcal{Q} := \{ x \in \mathbb{R}^d : \| x \| \le R'  \}$ for all $k \in \mathbb{N}$. Since $\mathcal{Q}$ is a compact set and $\grad^2 f$ is locally Lipschitz continuous then by \cite[Theorem 2.1.6]{cobzacs2019lipschitz} there exists some $L > 0$ such that $\grad^2 f$ is $L$-Lipschitz continuous on $\mathcal{Q}$ which guarantees that \Cref{assume:nice-directions} holds.
			Since \Cref{assume:nice-directions} holds we can apply \Cref{thm:main-fully-adaptive-result}
			to deduce that $\lim_{k \rightarrow \infty} \varepsilon_k = 0$ as desired. 
		}
	\end{proof}
}

\section{Numerical results}\label{sec:numerical-results}

This section evaluates the effectiveness of our method compared with state-of-the-art solvers. We compare our method against the FORTRAN implementation of the Newton trust-region method (TRU) and ARC that are available through the GALAHAD library \citep{gould2010solving}.  
Our method is implemented in an open-source Julia module available at \url{https://github.com/fadihamad94/CAT-Journal}.\linebreak This repository also provides detailed tables of results and instructions for reproducing the experiments. The experiments are performed in a single-threaded environment on a Linux virtual machine that has an Intel(R) Core(TM) i5-3470 CPU 3.20GHz and 16 GB RAM. 

\paragraph{Subproblem solver} The implementation uses factorizations combined with a bisection routine (and inverse-power iteration for the hard-case) to solve the trust-region subproblems (i.e., satisfy \eqref{eq:subproblem-termination-criteria}). The full details of the implementation are described in Appendix~\ref{sec:Solving the trust-region sub-problem}.

\paragraph{Algorithmic parameters} The parameters for these experiments (unless explicitly mentioned) are: \edit{$\AcceptableConst = 0$}, $\beta = 0.1$, $\theta = 0.1$, $\omega_1 = 8.0$, $\omega_2 = 16.0$, $\SPTone = 0.01$, $\SPTtwo = 0.8$, and $\SPTthree = 0.5$. For the initial radius, we  include a more sophisticated radius
selection heuristic based on the initial starting point: $r_1 = \frac{10 \| \grad f(x_1)\|}{\| \grad ^ 2 f(x_1) \|}$. When implementing Algorithm \ref{alg:CAT} with some target tolerance $\epsilon$, we immediately terminate when we observe a point $x_k$ with $\varepsilon_k \leq \epsilon$. This also includes the case when we check the inner termination criterion for the trust-region subproblem. 

\paragraph{Termination criteria} Our algorithm is stopped as soon as $\varepsilon_{k}$ is smaller than $10 ^ {-5}$. For ARC and TRU \citep{orban-siqueira-cutest-2020}, we used the same gradient termination tolerance, keeping the parameters of these algorithms at their default values. Also, the three algorithms are terminated with a failure status code if the search direction norm\footnote{GALAHAD refers to this lower bound on $\|d_k\|$ as the step size limit, and they use a default value of $2 \times 10^{-16}$} $\|d_k\|$ falls below $2 \times 10^{-16}$.

\paragraph{Benchmark problems} We evaluated the performance of our algorithm on the CUTEst benchmark set \citep{orban-siqueira-cutest-2020} available \edit{at} \url{https://github.com/JuliaSmoothOptimizers/CUTEst.jl}. This is a
standard test set for nonlinear optimization algorithms. We selected all the unconstrained problems with more than $100$ variables, which gives us a total of $125$ problems.

\subsection{Comparison with TRU and ARC}\label{sec:CUTEst-results}

From Tables~\ref{table:cutest-run-statistics-combined} and~\ref{table:cutest-run-statistics-3} we can observe that our algorithm outperforms TRU and ARC across various metrics (excluding total number of factorization and wall-clock time). 

In addition, the comparison between these algorithms in terms of total number of function evaluations, total number of gradient evaluations, total number of Hessian evaluations, total number of factorizations, total wall-clock time, and objective function is summarized in Figure~\ref{fig:performance-fraction-problems-solved}. The term `objective difference from best' (bottom right of Figure~\ref{fig:performance-fraction-problems-solved}) is the difference between each method's objective value at termination and the best objective value achieved among all methods for a given problem.  Notably, the corresponding plot shows our algorithm terminates with the best objective value more frequently than the other algorithms.

From Table~\ref{table:cutest-run-statistics-combined} we see that TRU uses fewer factorizations than our method despite the fact our method solves fewer 
trust-region subproblems (the number of trust-region subproblem solves
equals the number of function evaluations).
We believe one reason TRU requires fewer factorizations trust-region is its subproblem solver. Our trust-region subproblem solver uses a bisection routine to find $\delta_k$ whereas the trust-region subproblem solver for TRU is more sophisticated. Their trust-region subproblem solver (TRS), available through the GALAHAD library, uses more advanced root finding techniques (they apply Newton's method to find a root of the secular equation, coupled with enhancements using high-order polynomial approximation) to find $\delta_k$ \cite[Section 3]{gould2010solving}. We tried using TRS, but it had issues consistently satisfying our termination criteria \eqref{eq:subproblem-termination-criteria}, which led to poor performance of our method. For that reason, we decided to implement our own subproblem solver (Appendix~\ref{sec:Solving the trust-region sub-problem}).

In addition, TRU's factorization is faster than ours, which affects the wall-clock time. TRU uses a specialized, closed-source Harwell Subroutine Library (HSL) package (MA57) when solving the trust-region subproblem, whereas our approach uses the Cholesky factorization from the open-source Linear Algebra package in Julia. Thus, we anticipate that the wall-clock time for our approach could be reduced by using a specialized package like HSL.

A table with our full
results can be found at \linebreak \url{https://github.com/fadihamad94/CAT-Journal/tree/master/results}.

\begin{figure}[]
	\centering
	\includegraphics[scale=0.3]{./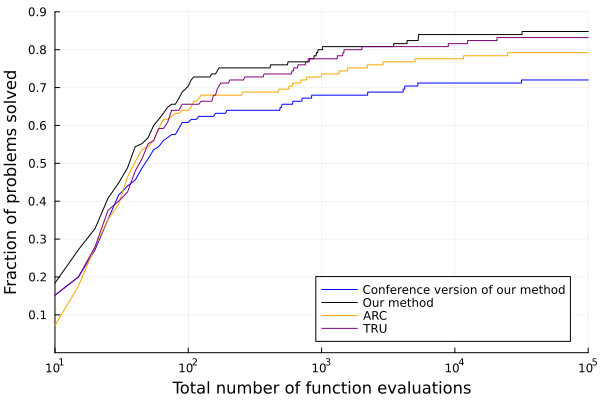}
	\includegraphics[scale=0.3]{./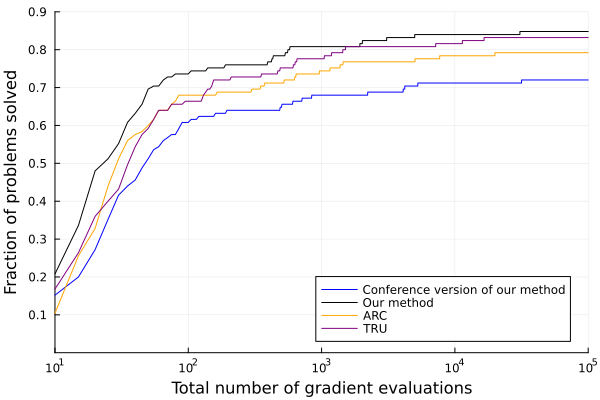}
	\includegraphics[scale=0.3]{./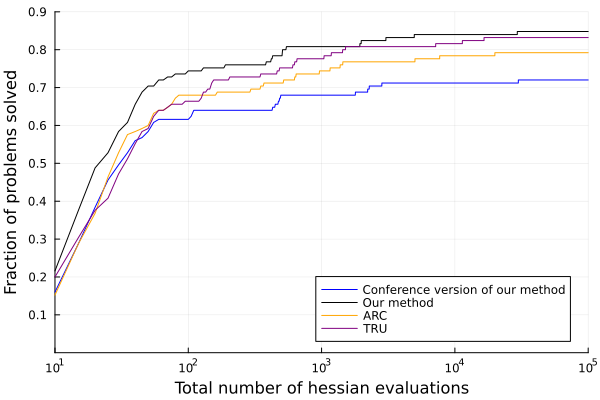}
	\includegraphics[scale=0.3]{./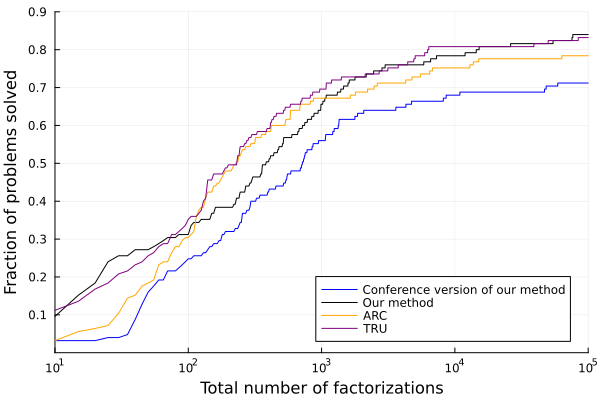}
	\includegraphics[scale=0.3]{./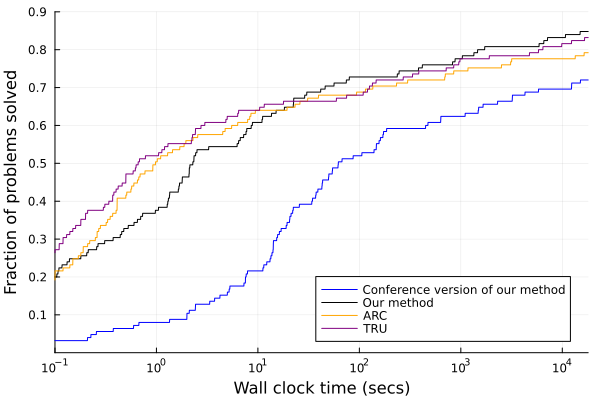}
	\includegraphics[scale=0.3]{./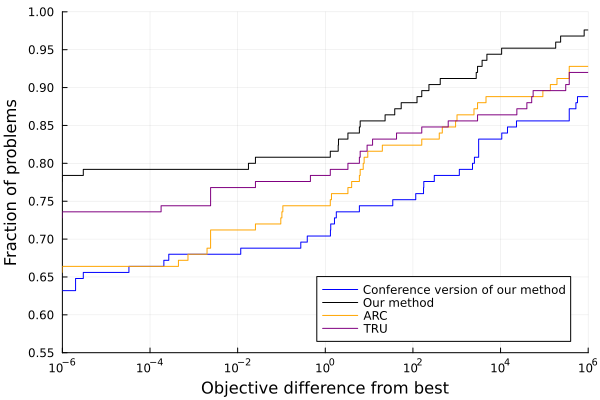}
	\caption{Fraction of problems solved on 125 unconstrained instances from the CUTEst benchmark set with more than 100 variables using $10^{-5}$ optimality tolerance, $100000$ iterations limit, and $5$ hours max time. 
	}
	\label{fig:performance-fraction-problems-solved}
\end{figure} 
\newcolumntype{P}[1]{>{\raggedright\arraybackslash}p{#1}}
\begin{table}[]
	\caption{\textbf{Performance statistics for the conference version \citep{hamad2022consistently} of our method, our method (this paper), ARC \citep{gould2003galahad}, and TRU \citep{gould2003galahad}} on 125 unconstrained instances from the CUTEst benchmark set (more than 100 variables) using a $10^{-5}$ optimality tolerance and a $5$ hour time limit. Failures counted as twice the maximum number of iterations ($200,000$) and the maximum time ($36,000$). Note that $\#f$, $\# \grad f$, and $\# \grad^2 f$ stands for the number of function, gradient and Hessian evaluations respectively. Additionally, $\#$ fact. is the number of Cholesky factorizations performed by the method.}
	\label{table:cutest-run-statistics-combined}
	\centering
	{
		\resizebox{\textwidth}{!}{
			\begin{tabularx}{\textwidth}{X lrrrrr}
				\toprule
				\textbf{metric} & \textbf{method} & \textbf{\#$f$} & \textbf{\#$\grad f$} & \textbf{\#$\grad^2 f$} & \textbf{\#fact.} & \textbf{time (seconds)} \\
				\midrule
				\multirow{4}{2.6cm}{\textbf{shifted geometric mean (shift value of 1.0)}}  
				& \textbf{Conf. version} & 405.1 & 405.1 & 326.3 & 1810.5 & 250.2 \\
				& \textbf{Our method}     & \textbf{132.7} & \textbf{101.6} & \textbf{93.1} & 567.3 & \textbf{17.2} \\
				& \textbf{ARC}            & 249.8 & 210.3 & 192.8 & 783.2 & 28.5 \\
				& \textbf{TRU}            & 172.5 & 150.9 & 132.8 & \textbf{467.1} & 22.9 \\
				\midrule
				\multirow{4}{2.6cm}{\textbf{median}} 
				& \textbf{Conf. version} & 47 & 47 & 30 & 716 & 63.1 \\
				& \textbf{Our method}     & \textbf{36} & \textbf{23} & \textbf{22} & 384 & 2.3 \\
				& \textbf{ARC}            & 39 & 29 & 27 & 228 & 1.98 \\
				& \textbf{TRU}            & 42 & 36 & 34 & \textbf{230} & \textbf{1.66} \\
				\bottomrule
			\end{tabularx}
		}
	}
\end{table}

\begin{table}[]
	\caption{\textbf{Failure reasons for the conference version \citep{hamad2022consistently} of our method, our method (this paper), ARC \citep{gould2003galahad}, and TRU \citep{gould2003galahad}} on 125 unconstrained instances from the CUTEst benchmark set (more than 100 variables). The runs used $10^{-5}$ optimality tolerance, a $100,000$ iteration limit, and a $5$ hour time limit. OOM stands for `out of memory'.}
	\label{table:cutest-run-statistics-3}
	\centering
	{
		\setlength{\tabcolsep}{3.5pt}
		\resizebox{\textwidth}{!}{
			\begin{tabularx}{\textwidth}{Xrrrrrr}
				\toprule
				\textbf{method} &  \textbf{iter. limit} & \textbf{max time} & \textbf{$2 \cdot 10^{-16} > \|d_k\|$}& \textbf{numerical error} & \textbf{OOM} & \textbf{total}  \\ \midrule
				\textbf{Conf. version} & 0 & 9 & 0 & 22 & 4 & 35\\
				\textbf{Our method} & 3& 2 & 7 & 3 & 4 & \textbf{19}\\
				\textbf{ARC} & 6& 10 & 1 & 5 & 4 & 26 \\
				\textbf{TRU}  & 5  &  5 & 3 & 4 & 4& 21\\
				\bottomrule
			\end{tabularx}
	}}
	\label{label:failures}
\end{table}

\section{Acknowledgments}
The authors were supported by AFOSR grant \#FA9550-23-1-0242. The authors would like to thank Coralia Cartis and Nicholas Gould for their helpful feedback on a draft of this paper, and Nicholas Gould for his suggestions on running the GALAHAD package. The authors would also like to thank Akif Khan for identifying errors in an early draft and for the helpful discussions. 

\clearpage
\appendix
\newcommand{\backtrackingFactor}[0]{\mu}
\possibleImprovement{\section{Convergence of gradient descent with Armijo rule on locally Lipschitz functions}\label{app:convergence-gd-armijo-rule}
	Section \ref{sec:locally-lipschitz-second-derivatives} shows that 
	if the second derivatives are locally Lipschitz then our trust region method either demonstrates the objective is unbounded below or converges to a stationary point. 
	For completeness, this section shows that standard gradient descent with the Armijo rule, assuming the function is differentiable, either demonstrates the objective is unbounded below or the smallest gradient observed gets arbitarily small. 
	Standard results either assume that the gradient Lipschitz continuous on the initial level set (\cite{armijo1966minimization}), 
	or show that all limit points of the iterates have zero gradient which does rule out the possibility that the iterates diverge while the objective value converges (\cite{bertsekas1997nonlinear}).
	In contrast, \cite{patel2024gradient} show that no \emph{prespecified} diminishing step size schedule can guarantee such a result. 
	
	Gradient descent with the Armijo rule is defined by the following iterations:
	\begin{subequations}\label{eq:GD-with-Armijo-rule}
		\begin{flalign}
			S_k &:= \{ i \ge 0 : f(x_k - \eta_{k-1} \backtrackingFactor^i \grad f(x_k)) \le f(x_k) - c \eta_{k-1} \backtrackingFactor^i \| \grad f(x_k) \|^2 \} \\
			\eta_k &\gets  \eta_{k-1} \max_{i \in S_k} \backtrackingFactor^i \\
			x_{k+1} &\gets x_k - \eta_k \grad f(x_k)
		\end{flalign}
	\end{subequations}
	for each $k \in \N$ where $\eta_0 \in (0, \infty)$, $c \in (0, 1)$ and $\backtrackingFactor \in (0,1)$ are problem-independent algorithmic parameters and $x_1$ is the starting point.
	
	\begin{proposition}
		Suppose that $f$ is differentiable, 
		then, gradient descent with Armijo rule (i.e., \Cref{eq:GD-with-Armijo-rule}) either satisfies $\inf_{k \ge 1} \| \grad f(x_k) \| = 0$ \\ 
		or $\lim_{k \rightarrow \infty} f(x_k) = -\infty$.
	\end{proposition}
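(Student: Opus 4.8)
The plan is to argue by cases on whether the iterates are bounded, mirroring the two-case analysis in the proof of the preceding theorem. I would first dispose of the easy alternative: if $\{f(x_k)\}$ is not bounded below then $\lim_{k\to\infty}f(x_k)=-\infty$ and we are done, so assume $\Delta_f := f(x_1)-\inf_k f(x_k)<\infty$ and aim to show $\inf_{k\ge1}\|\grad f(x_k)\|=0$. (If $\grad f(x_k)=0$ for some $k$ this is immediate, so assume all gradients are nonzero.) First I would record the basic properties of the line search \eqref{eq:GD-with-Armijo-rule}: since $f$ is differentiable, the map $t\mapsto f(x_k-t\grad f(x_k))$ has derivative $-\|\grad f(x_k)\|^2<0$ at $t=0$, so for all sufficiently large $i$ the Armijo test holds (as $c<1$); hence $S_k\neq\emptyset$, $\max_{i\in S_k}\backtrackingFactor^i=\backtrackingFactor^{\min S_k}$, and $\eta_k=\eta_{k-1}\backtrackingFactor^{\min S_k}\le\eta_{k-1}$, i.e.\ the step sizes are nonincreasing. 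Acceptance gives the descent inequality $f(x_k)-f(x_{k+1})\ge c\,\eta_k\|\grad f(x_k)\|^2$, and telescoping yields $\sum_k \eta_k\|\grad f(x_k)\|^2\le \Delta_f/c<\infty$.

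In the \textbf{unbounded} case I would conclude directly. Writing $\|x_{k+1}-x_k\|=\eta_k\|\grad f(x_k)\|$, the descent inequality gives $f(x_k)-f(x_{k+1})\ge c\,\|\grad f(x_k)\|\,\|x_{k+1}-x_k\|\ge c\,(\min_{j\le K}\|\grad f(x_j)\|)\,\|x_{k+1}-x_k\|$ for every $k\le K$. Summing over $k\le K$ and applying the triangle inequality gives $\Delta_f\ge c\,(\min_{j\le K}\|\grad f(x_j)\|)\,\|x_{K+1}-x_1\|$, so $\min_{j\le K}\|\grad f(x_j)\|\le \Delta_f/\bigl(c\,\|x_{K+1}-x_1\|\bigr)$. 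Taking $K$ along a subsequence with $\|x_{K+1}\|\to\infty$ forces the right-hand side to $0$, hence $\inf_k\|\grad f(x_k)\|=0$. This step needs no regularity beyond the descent inequality.

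In the \textbf{bounded} case I would argue by contradiction, assuming $\liminf_k\|\grad f(x_k)\|>0$, so that $\|\grad f(x_k)\|$ is bounded below by some constant for all large $k$. From $\sum_k\eta_k\|\grad f(x_k)\|^2<\infty$ this forces $\eta_k\to0$ and $\|x_{k+1}-x_k\|=\eta_k\|\grad f(x_k)\|\to0$. Since $\eta_k$ is nonincreasing and tends to $0$, backtracking must be active infinitely often ($\min S_k\ge1$ for $k$ in an infinite set $\mathcal{K}$), so the trial step $\eta_k/\backtrackingFactor$ violates the Armijo test: $f\!\left(x_k-(\eta_k/\backtrackingFactor)\grad f(x_k)\right)>f(x_k)-c(\eta_k/\backtrackingFactor)\|\grad f(x_k)\|^2$. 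Applying the mean value theorem to $t\mapsto f(x_k-t\grad f(x_k))$ on $[0,\eta_k/\backtrackingFactor]$ produces $w_k=x_k-\theta_k\grad f(x_k)$ with $\grad f(w_k)^\top\grad f(x_k)<c\|\grad f(x_k)\|^2$ and $\|w_k-x_k\|\le(\eta_k/\backtrackingFactor)\|\grad f(x_k)\|\to0$.

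Finally, from the bounded sequence $\{x_k\}_{k\in\mathcal{K}}$ I would extract a subsequence converging to some $\bar x$; then $w_k\to\bar x$ as well. Passing to the limit in $\grad f(w_k)^\top\grad f(x_k)<c\|\grad f(x_k)\|^2$ gives $\|\grad f(\bar x)\|^2\le c\|\grad f(\bar x)\|^2$, and since $c<1$ this forces $\grad f(\bar x)=0$, contradicting the lower bound $\|\grad f(x_k)\|\ge\bar g>0$. The main obstacle is precisely this limit: it requires $\grad f$ to be continuous at $\bar x$, which is \emph{not} automatic from plain differentiability, since the gradient of a merely differentiable function can be unbounded and oscillatory near a point. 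I would close this gap by invoking continuity of $\grad f$, which holds under the paper's standing assumption that $f$ is twice differentiable (so $\grad f$ is itself differentiable, hence continuous); equivalently, one restricts to the compact hull of the bounded iterates and runs the classical Armijo stationarity argument there.
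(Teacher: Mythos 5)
Your proof is correct and follows the same two-case decomposition as the paper: dispose of the unbounded-below alternative, then treat unbounded and bounded iterate sequences separately. In the unbounded case your argument is essentially identical to the paper's — the Armijo descent inequality, telescoping, and the triangle inequality give $\Delta_f \ge c\,\|x_{K+1}-x_1\|\,\min_{j\le K}\|\grad f(x_j)\|$, which drives the infimum of the gradient norms to zero along a subsequence with $\|x_{K+1}\|\to\infty$. The difference is in the bounded case: the paper simply cites Proposition 1.2.1 of Bertsekas (every limit point of Armijo gradient descent is stationary) plus Bolzano--Weierstrass, whereas you reprove that classical fact from scratch — step sizes nonincreasing and summable, hence backtracking active infinitely often, the mean value theorem producing $w_k$ with $\grad f(w_k)^\top\grad f(x_k) < c\,\|\grad f(x_k)\|^2$, and a limit-point contradiction using $c<1$. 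What your self-contained route buys is transparency about hypotheses: as you correctly flag, passing to the limit requires continuity of $\grad f$, which the proposition's stated hypothesis (``$f$ is differentiable'') does not by itself provide. The paper's proof has exactly the same dependence, only hidden inside the citation, since Bertsekas's proposition assumes $f$ is continuously differentiable; both arguments are rescued by the paper's standing assumption that $f$ is twice differentiable, so that $\grad f$ is continuous. Your version makes this gap visible and closes it explicitly, at the cost of being longer; the paper's version is shorter but leaves the hypothesis mismatch implicit.
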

	
	\begin{proof}		
		First, the algorithm is well-defined, i.e., $S_k$ is nonempty by the Peano form of Taylor's theorem.
		
		The analysis proceeds by considering two cases regarding the behavior of the iterates $\{x_k\}$. We assume that $f(x_k)$ is bounded below, i.e., there exists some constant $\Delta_f > 0$ such that $f(x_1) - f(x_k) \le \Delta_f < \infty$ since otherwise we have $\lim_{k \rightarrow \infty} f(x_k) = -\infty$ because $f(x_k)$ is monotone decreasing.
		
		\subsection*{Case 1: $\{x_k\}$ is unbounded} Consider the case where the sequence of iterates is unbounded, specifically $\lim_{K \rightarrow \infty} \max_{k \in [K]} \| x_K \| = \infty$. By definition of $S_k$ we have  $f(x_k) - f(x_{k+1}) \ge c \eta_k \| \grad f(x_k) \|^2$. Using telescoping and the triangle inequality, for any iterate $j < K$, the total decrease in the function value can be expressed as:
		\begin{flalign*}
			f(x_j) - f(x_{K+1}) &\ge \sum_{k=j}^K c \eta_k \| \grad f(x_k) \|^2 \ge \left( \sum_{k=j}^K c \| x_k - x_{k+1} \| \right) \min_{k \in [j,K]} \| \grad f(x_k) \| \\
			&\ge c \| x_{K+1} - x_j \| \min_{k \in [j,K]} \| \grad f(x_k) \|.
		\end{flalign*}
		Rearranging the above inequality and using $\lim_{K \rightarrow \infty} \max_{k \in [K]} \| x_K \| = \infty$  implies
		$\inf_{k \ge j} \| \grad f(x_k) \| = 0$ for all $j \in \N$ as desired.
		
		\subsection*{Case 2: $\{x_k\}$ is bounded}
		\cite[Proposition 1.2.1]{bertsekas1997nonlinear}
		shows that all limit points of \Cref{eq:GD-with-Armijo-rule} have zero gradient and 
		the iterates are bounded so there must exist a limit point by the Bolzano-Weierstrass theorem. 
		Thus, $\inf_{k \ge 1} \| \grad f(x_k) \| = 0$ as desired.
	\end{proof}
}

\section{\edit{Comparison with the conference version of the paper}}\label{sec:comparison-conference-version}
In this section, we will comment on the similarities and differences between the current version of our method and the conference version \citep{hamad2022consistently}. In Subsection~\ref{sec:ablation-study-results}, we will show how these modifications improved performance.

The most significant change is the approach for solving the trust-region subproblem. Previously, in the conference version, the termination criterion for the trust-region subproblem was $\|\grad \Mk(d_k) + \delta_k d_k \| \le \SPTone \| \grad f(x_k + d_k)\|$. This condition was impractical as it required  computing  the next gradient to validate the trust-region subproblem solution $d_k$ even when the step was not accepted, wasting gradient evaluations. To address this, the conference version was implemented with $\SPTone = 0$. Consequently, for the hard case in the subproblem solves, we computed the singular value decomposition (SVD) of the Hessian (see discussion of ``hard case" in \citet[Chapter 4]{nocedal2006numerical} for more details). This approach was slow, as it did not take advantage of the sparsity of the Hessian and was susceptible to numerical errors in the singular value decomposition. 

This paper resolves these issues by replacing $\|\grad \Mk(d_k) + \delta_k d_k \| \leq \SPTone \| \grad f(x_k + d_k)\|$, with \eqref{eq:model-gradient-weaker} which depends only on the gradient norm of previous iterates. Similarly, our termination condition now checks that $\varepsilon_k \leq \epsilon$ (Theorem~\ref{thm:main-fully-adaptive-result}) instead of $\|\grad f(x_k + d_k)\| \leq \epsilon$. While these changes may seem subtle, they forced
us to completely redesign the proofs.

For the hard case, we now use inverse-power iteration to compute an approximate minimum eigenvalue instead of an SVD, and then we generate the search direction as described in Algorithm~\ref{alg:TRS-New}. This approach is significantly faster. Details of our revised trust-region subproblem solver appear in Appendix~\ref{sec:Solving the trust-region sub-problem}
and can be contrasted with the solver presented in the conference version \cite[Appendix C]{hamad2022consistently}. 

There are three additional, more minor modifications, aimed at improving the practical performance of the method:
\begin{itemize}
	\item We added a heuristic for selecting the initial radius: $r_1 = \frac{10 \| \grad f(x_1)\|}{\| \grad ^ 2 f(x_1) \|}$ instead of a fixed radius of $r_1 = 1$, as was done in the conference version \citep{hamad2022consistently}.
	The goal of this choice is to make the algorithm invariant to scaling. 
	Specifically, consider any function $f$ and starting point $\hat{x}_0$. Then, for all values of the scalar $\alpha \neq 0$ the algorithm applied to $f(\alpha x)$ starting from $x_0 = \hat{x}_0/\alpha$
	will have the same iterates, up to the scaling factor $\alpha$. In the corner case when the spectral norm of the Hessian at the starting iterate is zero, i.e., $\| \grad ^ 2 f(x_1) \| = 0$, we set $r_1 = 1$.
	\item We changed the radius update rule. In the conference version \citep[Algorithm~1]{hamad2022consistently}, when $\hat{\rho}_k < \beta_1$, we set $r_{k+1}$ to $\|d_k\| / \omega_1$; otherwise, we set $r_{k+1}$ to $\omega_1 \|d_k\|$. However, in Algorithm~\ref{alg:CAT} when $\hat{\rho}_k < \beta_1$, we set $r_{k+1}$ to $r_k / \omega_1$; otherwise, we set $r_{k+1}$ to $\max\{\omega_2 \|d_k\|, r_k\}$.
	This change ensures that the radius does not shrink on successful
	steps even when the search direction is small.
	\item We modified the computation of $\hat{\rho}$. The definition of $\hat{\rho}$ given in \eqref{eq:rho-hat-k} replaces $\|\grad f(x_k + d_k)\|$ in \citet[Equation~(8)]{hamad2022consistently} with \edit{ the term }$\min\{\|\grad f(x_k)\|, \|\grad f(x_k + d_k)\|\}$. This change makes steps more likely to be considered successful.
\end{itemize}

\subsection{Ablation study}\label{sec:ablation-study-results}
To justify our modifications mentioned above, we conducted an ablation study evaluating the consequences of disabling each modification separately. 
The modifications we study are: 
\begin{itemize}
	\item \emph{Rho hat rule.} We use $\hat{\rho}$ defined in \eqref{eq:rho-hat-k} instead of $\rho$ defined in \eqref{eq:classic-rho-k}, as in classical trust-region methods.
	\item \emph{Radius update rule.} We update $r_{k+1}$ to be $\max\{\omega_2 \|d_k\|, r_k\}$ instead of $\omega \|d_k\|$, which is used in \citep{hamad2022consistently} when $\hat{\rho}_k \geq \beta$. Otherwise, we update $r_{k+1}$ to be $r_k / \omega_1$ instead of $\|d_k\| / \omega$, which is used in \citep{hamad2022consistently}. We use $\omega_2 = 16.0$ and $\omega_1 = 8.0$. In the conference version of this work \citep{hamad2022consistently}, $\omega$ has the same value as $\omega_1$.
	\item \emph{Initial radius rule.} We use $r_1 = 10 \frac{\|\grad f(x_1)\|}{\|\grad ^ 2 f(x_1)\|}$ instead of the default starting radius $r_1 = 1.0$ that we used in the conference version of this work \citep{hamad2022consistently}.
	\item \emph{New trust-region subproblem solver.} We use the new trust-region subproblem solver (Appendix~\ref{sec:Solving the trust-region sub-problem}) instead of the old trust-region subproblem solver developed in the conference version of this work \citep[Appendix C]{hamad2022consistently}.
\end{itemize} 
The set of problems consisted of all the unconstrained instances with more than $100$ variables from the CUTEst benchmark set \citep{orban-siqueira-cutest-2020}. This gives us a total of $125$ problems. Each of these experiments is run with the default algorithm parameters (see Section~\ref{sec:numerical-results} for details).

Figure~\ref{fig:ablation-study} plots the median for the total number of function evaluations, the total number of gradient evaluations, the total number of Hessian evaluations, and the total wall-clock time with our new method and each enhancement seperately disabled. 
For the radius update rule there is a slight increase in the number of function evaluations but all other metrics improve.
For all other modifications, none of the metrics increased by their removal.

\begin{figure}
	\centering
	\includegraphics[scale=0.3]{./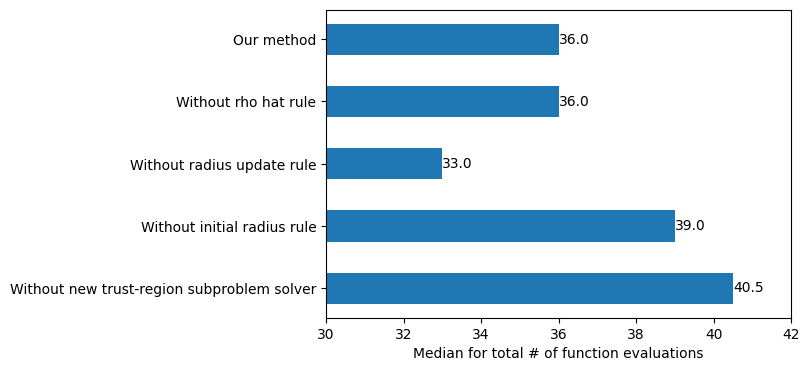}
	\includegraphics[scale=0.3]{./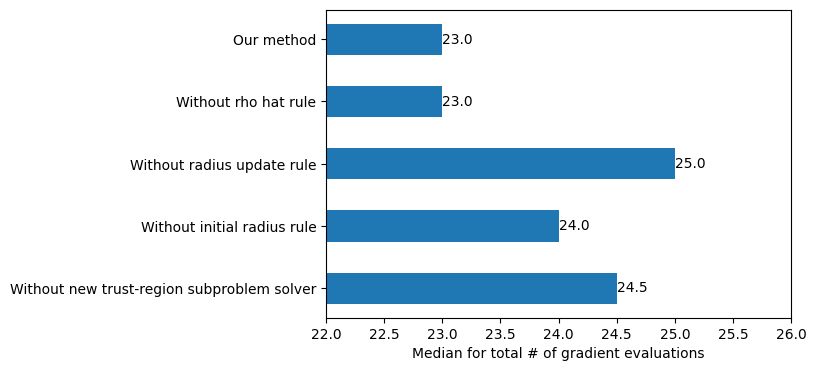}
	\includegraphics[scale=0.3]{./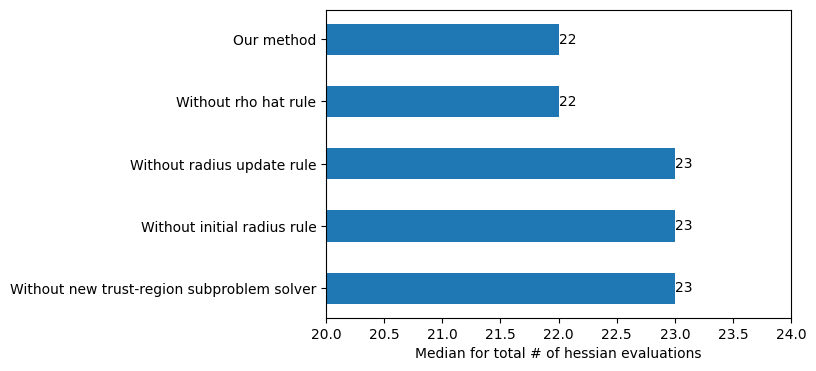}
	\includegraphics[scale=0.3]{./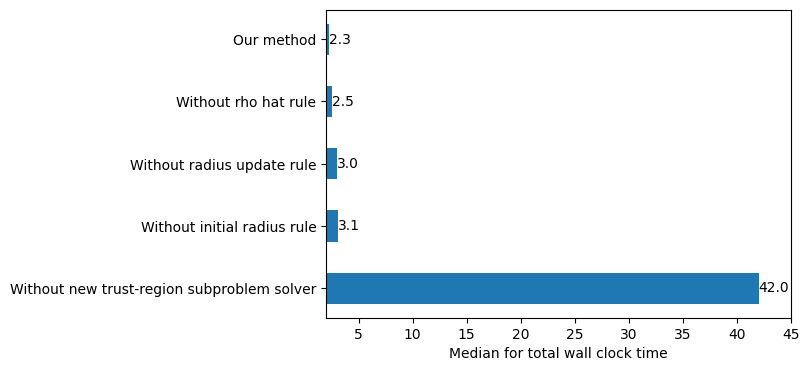}
	\caption{Ablation experiments results}
	\label{fig:ablation-study}
\end{figure}

\section{Solving the trust-region subproblem}\label{sec:Solving the trust-region sub-problem}

In this section, we detail our approach to satisfy the trust-region subproblem termination criteria given in \eqref{eq:subproblem-termination-criteria}. We first attempt to take a Newton step by checking if $\grad^2 f(x_k) \succ 0$ and $\|\grad^2 f(x_k)^{-1} \grad f(x_k)\| \leq r_k$. 

If this condition is not satisfied, we employ Algorithm~\ref{alg:TRS-New}. The basis of Algorithm~\ref{alg:TRS-New} is a univariate function $\phi$, whose root solves the subproblem termination criteria.  The function $\phi$ is nonincreasing and defined as:
\[
{
\small
\phi(\delta) := \begin{cases}
	+1 & \text{if $\grad^2 f(x_k) + \delta \eye \nsucc 0 \text{ or } \|d_k^{\delta} \| >  r_k $}\\
	0 & \text{if $\SPTtwo r_k \leq \|d_k^{\delta}\|  \leq r_k \And  \|\grad{\Mk(d_k^{\delta}) + \delta d_k^{\delta}}\| \leq \SPTone \varepsilon_k$} \\
	0 & \text{if $\|d_k^{\delta}\|  \leq r_k \And  \|\grad{\Mk(d_k^{\delta})}\| \leq \SPTone \varepsilon_k$} \\
	-1 & \text{if $\|d_k^{\delta} \| < \SPTtwo r_k$}
\end{cases}
}
\]
where $d_k^{\delta} := -(\grad^2 f(x_k) + \delta \eye) ^ {-1} \grad f(x_k)$. 
We first find an initial interval where $\phi$ changes sign, and then employ bisection to this interval.
The bisection routine stops either when the trust-region subproblem termination criteria is satisfied, or when the interval size becomes sufficiently small, i.e., $\delta' - \delta \leq \frac{\SPTone \varepsilon_k}{6 r_k} \And \| \nabla \Mk(d_k^{\delta'}) + \delta' d_k^{\delta'} \| \leq \frac{\SPTone \varepsilon_k}{3}$ (Line~\ref{line:switch-to-hard-case}). 
% this was too much detail and difficult to follow
%Please note the above definition of the function $\phi$ reflects our implementation when trying to validate subproblem termination criteria \eqref{eq:subproblem-termination-criteria}. Specifically, when $\|\grad{\Mk(d_k^{\delta})}\| \leq \SPTone \varepsilon_k$ given  $\|d_k^{\delta}\| \leq r_k$, we accept $d_k^{\delta}$. Moreover, in our implementation, we check that \eqref{eq:model-reduction-by-dist1} is satisfied with $\SPTthree = 0.5$. When we fail to take a Newton step, we first find an interval $[\delta, \delta']$ with $\delta'/\delta = 2^{2i - 1}$ for $i \in \mathbb{N}$ such that $\phi(\delta) \times \phi(\delta') \leq 0$. This is done iteratively, using a warm start from the previous iteration. Using the initial interval $[\delta, \delta']$, we then apply the bisection method to find $\delta_k$ such that $\phi(\delta_k) = 0$. We iteratively shrink the interval $[\delta, \delta']$ by replacing one of its edges with $\delta_m = (\delta + \delta') / 2$ while maintaining the invariant $\phi(\delta) \geq 0$ and $\phi(\delta') \leq 0$
In the latter case, the trust-region subproblem is marked as a hard case and we use inverse-power iteration to find an approximate minimum eigenvector of the Hessian, $y$. 
The trust-region subproblem search direction becomes 
\[ 
d_k = d_k ^ {\delta'_k} + \alpha y
\]
where the scalar $\alpha$ is chosen such that $\|d_k\| = r_k$.
The hard-case termination conditions for the bisection routine (Line~\ref{line:switch-to-hard-case}) ensure we will satisfy our subproblem termination criteria \eqref{eq:subproblem-termination-criteria} if $y$ is sufficiently close to being a minimum eigenvector, i.e.,  $\|\grad^2 f(x_k)y - \lambda y\| \leq \frac{\SPTone \varepsilon_k}{6 r_k}$ where $\lambda$ is the minimum eigenvalue of the Hessian. This is because
\small{
	\begin{flalign*}
		\|\grad \Mk(d_k) + \delta'_k d_k\| &\leq  \|\grad^2 f(x_k) d_k^{\delta'_k} + \grad f(x_k) + \delta'_k d_k^{\delta'_k}\| + |\alpha| \|\grad^2 f(x_k) y + \delta'_k y\| \\ &\leq \| \grad \Mk(d_k^{\delta'_k}) + \delta'_k d_k^{\delta'_k} \| + |\alpha| \left(\|\grad^2 f(x_k) y - \lambda y\| + \|(\delta'_k + \lambda)y\|\right)
		\\&\leq \frac{\SPTone \varepsilon_k}{3}  + 	2r_k\left(\frac{\SPTone \varepsilon_k}{6 r_k} + \delta'_k - \delta_k \right) = \frac{\SPTone \varepsilon_k}{3} + 2r_k\left(\frac{\SPTone \varepsilon_k}{6 r_k} + \frac{\SPTone \varepsilon_k}{6 r_k} \right)= \SPTone \varepsilon_k
	\end{flalign*}
}
\normalsize
where in the last inequality, we used $\delta_k \leq -\lambda \leq \delta'_k$ and Line~\ref{line:switch-to-hard-case} from Algorithm~\ref{alg:TRS-New}.

%\hinder{For the revision we should consider removing this}
If, due to numerical errors, the inverse-power iteration approach fails, we attempt to compute a search direction with a perturbed gradient $\grad{f(x_k)} + 0.5 \SPTone \varepsilon_k u_k $, where $u_k$ is a random unit vector. %However, in this case, we need to satisfy \eqref{eq:model-gradient-weaker} with $0.5 \SPTone \varepsilon_k$. 

The whole approach is summarized in Algorithm~\ref{alg:TRS-New}. To avoid infinite loops our actual implementation limits the number of iterations in all loops to $100$. In addition, if Algorithm~\ref{alg:TRS-New} fails to generate a search direction that satisfying \eqref{eq:subproblem-termination-criteria},  Algorithm~\ref{alg:CAT} terminates with a failure status code:

`TRUST\_REGION\_SUBPROBLEM\_ERROR'.

These failures are reported in Table~\ref{label:failures} as numerical errors. 
These failures are genuine numerical errors:
when we reran our implementation with Float 256 precision, none of these problems failed with TRUST\_REGION\_SUBPROBLEM\_ERROR.

\LinesNumbered 
\begin{algorithm}[!tbh]
	\small
	\SetAlgoLined
	\KwIn{$\grad ^ 2 f(x_k)$, $\varepsilon_k$, $\delta_{k-1}$, $r_k$,
		$\SPTone$}
	\KwOut{Search direction}

	$[\delta_k, \delta'_k] \gets \callFindInitialInterval(\delta_{k-1})$\;

	$\delta_k, \delta_m, \delta'_k, d_k^{\delta_m}, d_k^{\delta'_k} \gets \callBisection{$\grad ^ 2 f(x_k)$, $\varepsilon_k$, $r_k$, $\delta_k$, $\delta'_k$, $\SPTone$}$\;

	\Return 
	$\begin{cases}
		d_k^{\delta_m} &\text{if } \phi(\delta_m) = 0 \\
		\callInversePowerIteration(\grad ^ 2 f(x_k), \varepsilon_k, r_k, \delta'_k, d_k^{\delta'_k}, \SPTone) &\text{otherwise}
	\end{cases}
	$	
	
	\Fn{\callFindInitialInterval{$\delta_{k-1}$}}{
		\If{$\phi(\delta_{k-1}) = 0$}{
			\Return $[\delta_{k-1}, \delta_{k-1}]$\;
		}
		
		\textbf{if} $\delta_{k-1} = 0$ \textbf{then} $\delta_{k-1} \gets 1$ \textbf{end}

		\For{$i =  1$ \KwTo $\infty$}{
			
			$x \gets 
			\begin{cases}
				\delta_{k-1}\text{\;} \quad\quad\quad\quad\quad\quad\quad   &\text{if } i = 1 \\
				(2^{(i-1)^2})^ {\phi(\delta_{k-1})} \cdot \delta_{k-1} \text{\;} \quad\quad\quad\quad\quad\quad\quad   &\text{otherwise} 
			\end{cases}
			$	
			
			$y \gets \delta_{k-1} \cdot (2^{i^2})^ {\phi(\delta_{k-1})}$\;
			
			$\Return 
			\begin{cases}
				[x, x]\text{\;} \quad\quad\quad\quad\quad\quad\quad   &\text{if } \phi(x) = 0 \\
				[y, y]\text{\;} &\text{if } \phi(y) = 0 \\
				[\min\{x, y\}, \max\{x, y\}]\text{\;} &\text{if } \phi(x) \cdot \phi(y) < 0
			\end{cases}
			$
		}
	}

	\Fn{\callBisection{$\grad ^ 2 f(x_k)$, $\varepsilon_k$, $r_k$, $\delta_k$, $\delta'_k$, $\SPTone$}}{
		\For{$i = 1$ \KwTo $\infty$}{
			$\delta_m \gets (\delta_k + \delta'_k) / 2$\;
			\If{$\phi(\delta_m) = 0$ }{
				\KwRet{$\delta_k, \delta_m, \delta'_k, d_k^{\delta_m}, d_k^{\delta'_k}$}\;
			}
			\uIf{$\phi(\delta_m) = 1$}{
				$\delta_k \gets \delta_m$\;
			}\Else{
				$\delta'_k \gets \delta_m$\;
			}
			\If{$\delta'_k - \delta_k \leq \frac{\SPTone \varepsilon_k}{6 r_k} \And \| \nabla \Mk(d_k^{\delta'_k}) + \delta'_k d_k^{\delta'_k} \| \leq \frac{\SPTone \varepsilon_k}{3}$\label{line:switch-to-hard-case}}{
			\KwRet{$\delta_k, \delta'_k, \delta'_k, d_k^{\delta'_k}, d_k^{\delta'_k}$}\tcp*[l]{Hard case}
			}
		}
	}

	\Fn{\callInversePowerIteration{$\grad ^ 2 f(x_k)$, $\varepsilon_k$, $r_k$, $\delta'_k$, $d_k^{\delta'_k}$}}{
		$y \sim \text{NormalDistribution}(0,1)^n$ \;
		
		\For{$i = 1$ \KwTo $\infty$}{
			$y \gets (\grad ^ 2 f(x_k) + \delta'_k I)^{-1} y / \| y \|$\;
			
			Find $\alpha$ such that $\|d_k^{\delta'_k} + \alpha y\| = r_k$\;
			
			\If{\eqref{eq:subproblem-termination-criteria} satisfied}{
				\KwRet{$d_k^{\delta'_k} + \alpha y$}\;
			}
		}
	}

	\caption{Trust-region subproblem solver (assuming Newton step fails)}
	\label{alg:TRS-New}
\end{algorithm}

\clearpage

\bibliographystyle{unsrtnat}
\bibliography{references}
	
\end{document}